\let\et=\etexdraw
\def\etexdraw{\drawbb\et}
\theoremstyle{plain}
\newtheorem{thm}{Theorem}[section]
\newtheorem{thm*}{Theorem}
\newtheorem{lem}[thm]{Lemma}
\newtheorem{prop}[thm]{Proposition}
\newtheorem{prop*}[thm*]{Proposition}
\newtheorem{cor}[thm]{Corollary}
\theoremstyle{definition}
\newtheorem{defi}[thm]{Definition}
\theoremstyle{remark}
\DeclareMathOperator{\Ann}{ann}
\DeclareMathOperator{\Nil}{Nil}
\begin{document}

\title
[On the discreteness and rationality of $F$-jumping coefficients]
{On the discreteness and rationality of $F$-jumping coefficients}

\author{Mordechai Katzman}
\address[Katzman]{Department of Pure Mathematics,
University of Sheffield, Hicks Building, Sheffield S3 7RH, United Kingdom}
\email{M.Katzman@sheffield.ac.uk}

\author{Gennady Lyubeznik}
\address[Lyubeznik]{Department of Mathematics, University of Minnesota, Minneapolis, MN 55455, USA}
\email{gennady@math.umn.edu}
\author{Wenliang Zhang}
\address[Zhang]{Department of Mathematics, University of Minnesota, Minneapolis, MN 55455, USA}
\email{wlzhang@math.umn.edu}
\begin{abstract}We prove that the $F$-jumping coefficients of a principal ideal of an excellent regular local ring of characteristic $p>0$ are all rational and form a discrete subset of $\mathbb R$.
\end{abstract}
\keywords{generalized test ideals, $F$-jumping coefficients, characteristic p, regular rings, excellent rings, actions of the Frobenius}
\subjclass[2000]{Primary 13A35, 13H05}
\thanks{The second and third author are grateful to the NSF for support under grant DMS-0202176}





\maketitle

\section{Introduction}\label{Section: Introduction}

All rings in this paper are commutative, Noetherian and of characteristic $p>0$. 

In \cite{Hara-Yoshida} Hara and Yoshida defined generalized test ideals  $\tau(\mathfrak{a}^c)\subset R$
for ideals $\mathfrak{a}$ of a ring $R$ and
non-negative parameters $c\in\mathbb{R}$.
Since then several papers studied the dependence of these ideals on $c$.
Notably, in \cite{Blickle-Mustata-Smith-1} the authors studied, under the assumption that $R$ is regular and $F$-finite, the \emph{$F$-jumping coefficients}, i.e. the
non-negative $c\in \mathbb{R}$ for which $\tau( \mathfrak{a}^{c-\epsilon} ) \neq \tau( \mathfrak{a}^{c} )$
for all $\epsilon>0$.
They showed that when $R$ is regular and essentially of finite type over an $F$-finite field,
the $F$-jumping coefficients of every ideal $\mathfrak a$ of $R$ are all rational and form a discrete subset of $\mathbb R$.

Generalized test ideals and $F$-jumping coefficients have important counterparts in characteristic 0, namely, multiplier ideals and jumping coefficients. The discreteness and rationality of $F$-jumping coefficients parallels the discreteness and rationality of jumping coefficients in characteristic 0 \cite{ELSV}.

In \cite{Blickle-Mustata-Smith-2} discreteness and rationality has been proven for $F$-jumping coefficients of principal ideals in $F$-finite regular local rings. In this paper we extend this result to excellent (but not necessarily $F$-finite) regular local rings (note that an $F$-finite ring is automatically excellent). We expect that discreteness and rationality hold for the $F$-jumping coefficients of every ideal in some large class of regular rings (perhaps all excellent regular rings), but this remains to be proven. In anticipation of this development we develop the results in Sections 2 and 3 assuming the barest minimum about $R$ (namely, condition (*) of Section 2). 

An important aspect of our paper is a novel method in the proof of our main result (see Section 6) which is substantially different from \cite{Blickle-Mustata-Smith-2}.
Unlike \cite{Blickle-Mustata-Smith-2} we do not use $D$-modules. We use Frobenius actions on the injective hull of the residue field of $R$ (see Section 5).
This method germinated in \cite[p. 10]{Katzman} in a very simple proof, without $D$-modules,
of a key result of \cite{ABL}. This method undoubtedly holds a potential for more applications.


\section{Preliminaries: generalized test ideals and jumping coefficients}
\label{Test ideals}
In this section $R$ is a commutative Noetherian regular ring containing a field of characteristic $p>0$. If $\mathfrak a$ is an ideal of $R$, we denote $\mathfrak a^{[p^e]}$ the ideal generated by the $p^e$-th powers of the elements of $\mathfrak a$.

\begin{defi} If $\mathfrak a$ is an ideal of $R$, we set the ideal $I_e(\mathfrak a)$ to be the intersection of all the ideals $I$ of $R$ such that $I^{[p^e]}\supset \mathfrak a$.
\end{defi}

In this paper we deal with rings $R$ satisfying the following condition:

\bigskip

\noindent (*) $\ \ \ \ \ \ \ \ \ \ \ \ \ \ \ \ \ \ \ \ \ \ \ \ R$ is regular and $I_e(\mathfrak a)^{[p^e]}\supset \mathfrak a$ for all $\mathfrak a$ and all $e$.

\bigskip

Thus $R$ satisfies (*) if and only if for all $\mathfrak a$ and $e$ the set of the ideals $I$ such that $I^{[p^e]}\supset \mathfrak a$ has a unique minimal element, denoted $I_e(\mathfrak a)$ (in  \cite{Blickle-Mustata-Smith-1} $I_e(\mathfrak a)$ is denoted $\mathfrak a^{[1/p^e]}$).

In \cite[2.3]{Blickle-Mustata-Smith-1} it is shown that an $F$-finite regular (not necessarily local) ring satisfies (*). We will show in Section 4 that an excellent regular local ring also satisfies (*). 

In this section we list some basic properties of rings satisfying (*). All of these properties were proven in 
 \cite{Blickle-Mustata-Smith-1} for $F$-finite regular rings. 

If $R$ satisfies (*), then $(\mathfrak a^{\lceil cp^e\rceil})^{[1/p^e]}\subset (\mathfrak a^{\lceil cp^{e+1}\rceil})^{[1/p^{e+1}]}$, where $\lceil x\rceil$ is the smallest integer greater than or equal to $x$. This is proven in \cite[2.8]{Blickle-Mustata-Smith-1} for an $F$-finite regular ring $R$, but the proof uses only property (*). Since $R$ is Noetherian, the ascending chain 
$$(\mathfrak a^{\lceil cp\rceil})^{[1/p]}\subset (\mathfrak a^{\lceil cp^{2}\rceil})^{[1/p^{2}]}\subset (\mathfrak a^{\lceil cp^3\rceil})^{[1/p^3]}\subset \cdots$$
stabilizes. Following \cite[Definition 2.9]{Blickle-Mustata-Smith-1}, we define the generalized test ideal of $\mathfrak a$ with coefficient $c$, denoted $\tau(\mathfrak a^c)$, as the stable value of this chain, i.e. $\tau(\mathfrak a^c)=(\mathfrak a^{\lceil cp^e\rceil})^{[1/p^e]}$ for all sufficiently large $e$.

We quote the following two results from \cite{Blickle-Mustata-Smith-1}, whose proofs use only condition (*), rather than the stronger assumption of $F$-finiteness.

\begin{prop}\label{2.14}
\cite[2.14]{Blickle-Mustata-Smith-1} If $R$ satisfies condition (*), $\mathfrak a$ is an ideal of $R$ and $c$ is a non-negative real number, then there exists $\epsilon>0$ such that $\tau(\mathfrak a^c)=I_e(\mathfrak a^r)$ whenever $c<\frac{r}{p^e}<c+\epsilon$.
\end{prop}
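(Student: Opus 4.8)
The plan is to establish the two inclusions $I_e(\mathfrak a^r)\subseteq\tau(\mathfrak a^c)$ and $\tau(\mathfrak a^c)\subseteq I_e(\mathfrak a^r)$ separately, and to fix $\epsilon$ only at the end; we may assume $\mathfrak a\neq 0$ (and, since $\mathfrak a=R$ is trivial, that $\mathfrak a$ is proper). By the definition of $\tau(\mathfrak a^c)$ as the stable value of the chain, fix $E$ with $\tau(\mathfrak a^c)=I_m(\mathfrak a^{\lceil cp^m\rceil})$ for every $m\ge E$. I will use the formal properties of the operators $I_e$ in a ring satisfying $(*)$: $I_e$ preserves inclusions; $I_{e+e'}(\mathfrak b)=I_e(I_{e'}(\mathfrak b))$; $I_e(\mathfrak a^r)\subseteq I_{e+k}(\mathfrak a^{rp^k})$ (the argument of \cite[2.8]{Blickle-Mustata-Smith-1}, run with an arbitrary integer exponent in place of $\lceil cp^e\rceil$, using only $(\mathfrak a^r)^{[p]}\subseteq\mathfrak a^{rp}$ and flatness of Frobenius); the flatness principle that $L^{[p^k]}\subseteq\mathfrak b$ implies $L\subseteq I_k(\mathfrak b)$; and the projection formula $I_e(J^{[p^e]}\mathfrak b)=J\cdot I_e(\mathfrak b)$, which follows from $(*)$ together with $(A:B)^{[p^e]}=(A^{[p^e]}:B^{[p^e]})$ for finitely generated ideals in a regular ring. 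Finally, an integer $r$ with $r/p^e>c$ automatically satisfies $r\ge\lceil cp^e\rceil$.

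The inclusion $I_e(\mathfrak a^r)\subseteq\tau(\mathfrak a^c)$ holds for every pair with $r/p^e>c$, with no condition on $\epsilon$: from $r\ge\lceil cp^e\rceil$ we get $\mathfrak a^r\subseteq\mathfrak a^{\lceil cp^e\rceil}$, so for $e\ge E$ it follows that $I_e(\mathfrak a^r)\subseteq I_e(\mathfrak a^{\lceil cp^e\rceil})=\tau(\mathfrak a^c)$, while for $e<E$ one passes to level $E$ via $I_e(\mathfrak a^r)\subseteq I_E(\mathfrak a^{rp^{E-e}})$, notes that $rp^{E-e}/p^E=r/p^e>c$ forces $rp^{E-e}\ge\lceil cp^E\rceil$, and concludes as before.

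The reverse inclusion $\tau(\mathfrak a^c)\subseteq I_e(\mathfrak a^r)$ is the heart of the matter; I would prove it for $e\ge E$ and $r$ just above $cp^e$ by splitting at $r=\lceil cp^E\rceil\,p^{e-E}$. If $cp^e<r\le\lceil cp^E\rceil\,p^{e-E}$, then $(\mathfrak a^{\lceil cp^E\rceil})^{[p^{e-E}]}\subseteq\mathfrak a^{\lceil cp^E\rceil\,p^{e-E}}\subseteq\mathfrak a^r$, so the flatness principle gives $\mathfrak a^{\lceil cp^E\rceil}\subseteq I_{e-E}(\mathfrak a^r)$, hence $I_e(\mathfrak a^r)=I_E\bigl(I_{e-E}(\mathfrak a^r)\bigr)\supseteq I_E(\mathfrak a^{\lceil cp^E\rceil})=\tau(\mathfrak a^c)$. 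If instead $r=\lceil cp^E\rceil\,p^{e-E}+j$ with $j\ge 0$, then $\mathfrak a^r=(\mathfrak a^{\lceil cp^E\rceil})^{p^{e-E}}\cdot\mathfrak a^j\supseteq(\mathfrak a^{\lceil cp^E\rceil})^{[p^{e-E}]}\cdot\mathfrak a^j$, and applying $I_{e-E}$ together with the projection formula yields $I_{e-E}(\mathfrak a^r)\supseteq\mathfrak a^{\lceil cp^E\rceil}\cdot I_{e-E}(\mathfrak a^j)$; now if $\nu$ is the order of $\mathfrak a$ at the maximal ideal $\mathfrak m$ (so $\mathfrak a$ contains an element of order exactly $\nu$, and orders are additive because $\operatorname{gr}_{\mathfrak m}R$ is a polynomial ring), then for $j\nu<p^{e-E}$ we have $\mathfrak a^j\not\subseteq\mathfrak m^{j\nu+1}\supseteq\mathfrak m^{[p^{e-E}]}$, which by $(*)$ forces $I_{e-E}(\mathfrak a^j)=R$, so again $I_e(\mathfrak a^r)\supseteq I_E(\mathfrak a^{\lceil cp^E\rceil})=\tau(\mathfrak a^c)$. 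Combining the two ranges, both inclusions hold whenever $e\ge E$ and $c<r/p^e<\dfrac{\lceil cp^E\rceil}{p^E}+\dfrac{1}{\nu p^E}$.

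It remains to dispose of the finitely many levels $e<E$; there the conclusion can genuinely fail (the chain is not yet stable), so one simply excludes those pairs by taking $\epsilon$ smaller than the minimum over $e<E$ of the distance from $c$ to the least element of $\tfrac1{p^e}\mathbb Z$ exceeding $c$ — a minimum of finitely many positive numbers. Taking $\epsilon$ to be the minimum of $\lceil cp^E\rceil/p^E+1/(\nu p^E)-c$ and those gaps then gives a positive $\epsilon$ that works. The step I expect to be the real obstacle is precisely securing a strictly positive margin in the reverse inclusion: the first (``naive'') range by itself only yields $\epsilon=\lceil cp^E\rceil/p^E-c$, which collapses to $0$ exactly when $cp^E\in\mathbb Z$, i.e.\ when $c$ has a power of $p$ as denominator; it is the projection-formula argument in the second range, contributing the strictly positive term $1/(\nu p^E)$, that rescues the proof in that case. (For a general ring satisfying $(*)$ rather than a local one, one localizes to reduce to this computation, replacing $\nu$ by a bound on the orders $\operatorname{ord}_{\mathfrak m R_{\mathfrak m}}(\mathfrak a R_{\mathfrak m})$.)
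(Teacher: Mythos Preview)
The paper does not give its own proof of this proposition: it is quoted from \cite{Blickle-Mustata-Smith-1} with the remark that the argument there uses only condition~(*), and no further details are supplied. So there is nothing in the paper to compare your attempt against; your write-up functions as an independent reconstruction of the cited proof.

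Your argument is sound when $R$ is regular \emph{local}. The two formal ingredients you isolate---the increasing-chain inequality $I_e(\mathfrak a^r)\subseteq I_{e+k}(\mathfrak a^{rp^k})$ and the projection formula $I_e(J^{[p^e]}\mathfrak b)=J\,I_e(\mathfrak b)$---do follow from~(*) together with flatness of Frobenius, and your split into Case~1 ($r\le\lceil cp^E\rceil p^{e-E}$) and Case~2 ($r=\lceil cp^E\rceil p^{e-E}+j$ with $j\nu<p^{e-E}$) correctly produces a strictly positive margin $\lceil cp^E\rceil/p^E+1/(\nu p^E)-c$, which is exactly what is needed to cover the delicate case $cp^E\in\mathbb Z$. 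The bookkeeping for the finitely many levels $e<E$ is also fine.

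The one genuine gap is the non-local case. Your Case~2 invokes the order $\nu$ of $\mathfrak a$ at \emph{the} maximal ideal, and the concluding parenthetical (``localize and replace $\nu$ by a bound on the orders $\operatorname{ord}_{\mathfrak m R_{\mathfrak m}}(\mathfrak a R_{\mathfrak m})$'') is not justified: you would need to know that $I_e$ commutes with localization under the bare hypothesis~(*), and that a uniform bound on these orders exists over all maximal ideals, neither of which you address. Since Proposition~\ref{2.14} is stated for an arbitrary ring satisfying~(*), this is a real omission. That said, for the purposes of the present paper---which from Section~\ref{excellent} onward works exclusively with excellent regular \emph{local} rings---your local argument already covers every application made of the result.
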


\begin{cor}\label{2.16}
\cite[2.16]{Blickle-Mustata-Smith-1} If $R$ satisfies condition (*), $\mathfrak a$ is an ideal of $R$ and $c$ is a non-negative real number, then there exists $\epsilon>0$ such that $\tau(\mathfrak a^c)=\tau(\mathfrak a^{c'})$ for every $c'\in [c, c+\epsilon)$.
\end{cor}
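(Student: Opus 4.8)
The plan is to deduce the statement directly from Proposition~\ref{2.14}, using only the density of the $p$-adic rationals $r/p^e$ in $\mathbb R$. First I would apply Proposition~\ref{2.14} to the given $c$: there exists $\epsilon>0$ such that $\tau(\mathfrak a^c)=I_e(\mathfrak a^r)$ for every pair $(r,e)$ with $c<\frac{r}{p^e}<c+\epsilon$. I claim this $\epsilon$ works for the corollary as well.

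Fix $c'\in[c,c+\epsilon)$. The case $c'=c$ is trivial, so assume $c'>c$; in any case $c'<c+\epsilon$. Now apply Proposition~\ref{2.14} to $c'$: there is $\epsilon'>0$ with $\tau(\mathfrak a^{c'})=I_e(\mathfrak a^r)$ whenever $c'<\frac{r}{p^e}<c'+\epsilon'$. Shrinking $\epsilon'$ if necessary, I may assume $c'+\epsilon'\le c+\epsilon$. Since the numbers of the form $r/p^e$ with $r,e\in\mathbb Z_{\ge 0}$ are dense in $\mathbb R$, choose one with $c'<\frac{r}{p^e}<c'+\epsilon'$.

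For this choice we have $c\le c'<\frac{r}{p^e}<c'+\epsilon'\le c+\epsilon$, so the pair $(r,e)$ lies in \emph{both} ranges: it satisfies $c<\frac{r}{p^e}<c+\epsilon$ (witnessing $\tau(\mathfrak a^c)=I_e(\mathfrak a^r)$) and $c'<\frac{r}{p^e}<c'+\epsilon'$ (witnessing $\tau(\mathfrak a^{c'})=I_e(\mathfrak a^r)$). Hence $\tau(\mathfrak a^c)=I_e(\mathfrak a^r)=\tau(\mathfrak a^{c'})$, which is what we wanted.

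There is no real obstacle here: the content is entirely in Proposition~\ref{2.14}, and the only point requiring a moment's care is the bookkeeping that ensures a single $p$-adic rational $r/p^e$ simultaneously falls into the interval $(c,c+\epsilon)$ attached to $c$ and the interval $(c',c'+\epsilon')$ attached to $c'$ — which is exactly what the choice $c'+\epsilon'\le c+\epsilon$ together with $c\le c'$ guarantees.
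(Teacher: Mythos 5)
Your proof is correct, and it is the standard deduction of Corollary~\ref{2.16} from Proposition~\ref{2.14} (the paper itself simply quotes the corollary from \cite{Blickle-Mustata-Smith-1} without reproducing a proof). The key point — choosing a single $p$-adic rational $r/p^e$ landing simultaneously in $(c,c+\epsilon)$ and $(c',c'+\epsilon')$ by shrinking $\epsilon'$ — is exactly right, and the density of such rationals makes the choice possible.
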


Following \cite[Definition 2.17]{Blickle-Mustata-Smith-1}, we define a positive real number $c$ to be an $F$-jumping coefficient for an ideal $\mathfrak a$ if $\tau(\mathfrak a^c)\ne\tau(\mathfrak a^{c-\epsilon})$ (i.e. the containment $\tau(\mathfrak a^c)\subset\tau(\mathfrak a^{c-\epsilon})$ is strict) for every $\epsilon>0$.

\begin{lem}\label{closed}
If $R$ satisfies condition (*) and $\mathfrak a$ is an ideal of $R$, then the set of the $F$-jumping coefficients of $\mathfrak a$ is closed in $\mathbb R$.
\end{lem}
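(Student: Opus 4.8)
The plan is to use two soft facts about the function $c\mapsto\tau(\mathfrak a^c)$: that it is non-increasing, and that it is ``right-continuous'' in the sense of Corollary~\ref{2.16}. Monotonicity is immediate from the definition $\tau(\mathfrak a^c)=(\mathfrak a^{\lceil cp^e\rceil})^{[1/p^e]}$ for $e\gg 0$, since $c\le c'$ forces $\mathfrak a^{\lceil cp^e\rceil}\supseteq\mathfrak a^{\lceil c'p^e\rceil}$ and $I_e(-)$ preserves inclusions; in particular $\tau(\mathfrak a^c)\subseteq\tau(\mathfrak a^{c'})$ whenever $c'\le c$, so a positive real $c$ is an $F$-jumping coefficient precisely when $\tau(\mathfrak a^c)\subsetneq\tau(\mathfrak a^{c'})$ for every $c'$ with $0\le c'<c$.

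Let $J$ denote the set of $F$-jumping coefficients of $\mathfrak a$ and let $c$ lie in its closure in $\mathbb R$; the goal is to show $c\in J$. If $c\in J$ there is nothing to prove, so assume there is a sequence $c_n\in J\setminus\{c\}$ converging to $c$. First I would rule out accumulation from the right: by Corollary~\ref{2.16} there is $\epsilon>0$ with $\tau(\mathfrak a^{c'})=\tau(\mathfrak a^c)$ for all $c'\in[c,c+\epsilon)$; if some $c_n$ lay in $(c,c+\epsilon)$, then choosing $\delta>0$ with $c\le c_n-\delta$ we would get $\tau(\mathfrak a^{c_n})=\tau(\mathfrak a^c)=\tau(\mathfrak a^{c_n-\delta})$, contradicting $c_n\in J$. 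Hence only finitely many $c_n$ exceed $c$, and after discarding those we may assume $c_n<c$ for all $n$; since each $c_n>0$ and $c>c_1$, in particular $c>0$.

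Now I would verify the jumping condition at $c$ directly. Fix any $c'$ with $0\le c'<c$. Since $c_n\to c$, choose $n$ with $c'<c_n<c$. Because $c_n\in J$, we have $\tau(\mathfrak a^{c_n})\subsetneq\tau(\mathfrak a^{c'})$, while monotonicity gives $\tau(\mathfrak a^c)\subseteq\tau(\mathfrak a^{c_n})$; combining these, $\tau(\mathfrak a^c)\subsetneq\tau(\mathfrak a^{c'})$. As $c'$ was an arbitrary element of $[0,c)$, the number $c$ is an $F$-jumping coefficient, so $c\in J$ and the proof is complete.

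The argument is essentially point-set topology once monotonicity is in hand; the one genuinely necessary input beyond that is Corollary~\ref{2.16}, which is exactly what forbids $F$-jumping coefficients from accumulating at a point from the right. They may still accumulate from the left at an $F$-jumping coefficient, which is why closedness (rather than discreteness) is all that can be extracted at this level of generality, and why the left-hand analysis is the substantive half. Accordingly I would flag the invocation of Corollary~\ref{2.16} as the main point of the proof, even though applying it is routine.
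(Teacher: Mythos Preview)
Your proof is correct and follows essentially the same route as the paper's: use Corollary~\ref{2.16} to rule out accumulation from the right, then use monotonicity together with the jumping property of the approximating $c_n<c$ to force $\tau(\mathfrak a^c)\subsetneq\tau(\mathfrak a^{c'})$ for every $c'<c$. The paper's version is terser but the logical skeleton is identical.
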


\begin{proof} If $\lambda$ is an accumulation point of the $F$-jumping coefficients of $\mathfrak a$, then Corollary \ref{2.16} implies that there must exist an increasing sequence of $F$-jumping coefficients $\lambda_1<\lambda_2<\lambda_3<\cdots$ converging to $\lambda$. Since each $\lambda_i$ is an $F$-jumping coefficient, each containment $\tau(\mathfrak a^{\lambda_{n+1}})\subset\tau(\mathfrak a^{\lambda_{n}})$ is strict, hence each containment $\tau(\mathfrak a^{\lambda})\subset \tau(\mathfrak a^{\lambda_n})$ also is strict which shows that $\lambda$ is an $F$-jumping coefficient of $\mathfrak a$.
\end{proof}

Next we quote another result from \cite{Blickle-Mustata-Smith-1}, whose proof also uses only condition (*), rather than the stronger condition of $F$-finiteness.

\begin{prop}\label{3.4(2)}
\cite[3.4(2)]{Blickle-Mustata-Smith-1} If $R$ satisfies condition (*), $\mathfrak a$ is an ideal of $R$ generated by $m$ elements, and $\alpha>m$ is an $F$-jumping coefficient for $\mathfrak a$, then $\alpha - 1$ also is an $F$-jumping coefficient for $\mathfrak a$.
\end{prop}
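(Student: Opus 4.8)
The plan is to first establish a Skoda-type formula for generalized test ideals and then deduce the statement from it. Specifically, I would prove the following (this is \cite[3.4(1)]{Blickle-Mustata-Smith-1}, and its proof uses only condition (*)): \emph{if $\mathfrak a$ is generated by $m$ elements and $c\in\mathbb R$ satisfies $c\ge m$, then $\tau(\mathfrak a^c)=\mathfrak a\,\tau(\mathfrak a^{c-1})$.} Granting this, suppose toward a contradiction that $\alpha>m$ is an $F$-jumping coefficient of $\mathfrak a$ but $\alpha-1$ is not. Then there is an $\epsilon>0$, which (by the nestedness of the ideals $\tau(\mathfrak a^{c})$ in $c$) we may take small enough that $\alpha-\epsilon>m$, such that $\tau(\mathfrak a^{\alpha-1-\epsilon})=\tau(\mathfrak a^{\alpha-1})$. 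Multiplying by $\mathfrak a$ and applying the formula twice (legitimate because $\alpha-\epsilon\ge m$ and $\alpha\ge m$),
\[
\tau(\mathfrak a^{\alpha-\epsilon})=\mathfrak a\,\tau(\mathfrak a^{\alpha-1-\epsilon})=\mathfrak a\,\tau(\mathfrak a^{\alpha-1})=\tau(\mathfrak a^{\alpha}),
\]
which contradicts the assumption that $\alpha$ is an $F$-jumping coefficient of $\mathfrak a$. Hence $\alpha-1$ is an $F$-jumping coefficient.

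It remains to sketch the proof of the Skoda-type formula. Write $\mathfrak a=(f_1,\dots,f_m)$ and fix $e$ large enough that $\tau(\mathfrak a^{c})=I_e(\mathfrak a^{n})$ and $\tau(\mathfrak a^{c-1})=I_e(\mathfrak a^{n-p^e})$, where $n:=\lceil cp^e\rceil$ (note $\lceil(c-1)p^e\rceil=n-p^e$); since $c\ge m$ we automatically have $n\ge mp^e>m(p^e-1)$. Put $n':=n-p^e$. For the inclusion ``$\supseteq$'', I would use the elementary properties of $I_e$ valid under (*) — namely $I_e(J^{[p^e]}\mathfrak b)=J\,I_e(\mathfrak b)$, monotonicity of $I_e$, and $\mathfrak a^{[p^e]}\subseteq\mathfrak a^{p^e}$ — to compute $\mathfrak a\,I_e(\mathfrak a^{n'})=I_e(\mathfrak a^{[p^e]}\mathfrak a^{n'})\subseteq I_e(\mathfrak a^{p^e}\mathfrak a^{n'})=I_e(\mathfrak a^{n})=\tau(\mathfrak a^c)$. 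For ``$\subseteq$'', expand $\mathfrak a^{n}=\sum_{a_1+\cdots+a_m=n}(f_1^{a_1}\cdots f_m^{a_m})$ and use additivity of $I_e$ over finite sums to write $I_e(\mathfrak a^{n})=\sum_{a_1+\cdots+a_m=n}I_e\bigl((f_1^{a_1}\cdots f_m^{a_m})\bigr)$. For each exponent vector with $a_1+\cdots+a_m=n>m(p^e-1)$ some $a_j\ge p^e$ by pigeonhole, so $(f_1^{a_1}\cdots f_m^{a_m})=(f_j)^{[p^e]}\cdot(f_1^{a_1}\cdots f_j^{a_j-p^e}\cdots f_m^{a_m})$, and hence by $I_e(J^{[p^e]}\mathfrak b)=J\,I_e(\mathfrak b)$ and monotonicity,
\[
I_e\bigl((f_1^{a_1}\cdots f_m^{a_m})\bigr)=f_j\,I_e\bigl((f_1^{a_1}\cdots f_j^{a_j-p^e}\cdots f_m^{a_m})\bigr)\subseteq \mathfrak a\,I_e(\mathfrak a^{n'}),
\]
since $f_1^{a_1}\cdots f_j^{a_j-p^e}\cdots f_m^{a_m}$ is a monomial of degree $n'$. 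Summing over all such exponent vectors yields $\tau(\mathfrak a^{c})=I_e(\mathfrak a^{n})\subseteq\mathfrak a\,I_e(\mathfrak a^{n'})=\mathfrak a\,\tau(\mathfrak a^{c-1})$.

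I expect the main obstacle to be the ``$\subseteq$'' half of the Skoda formula: one must verify carefully that the properties of $I_e$ invoked there (additivity over finite sums, the identity $I_e(J^{[p^e]}\mathfrak b)=J\,I_e(\mathfrak b)$, monotonicity) genuinely follow from condition (*) alone rather than from $F$-finiteness, and one must keep the bookkeeping with the ceiling function and the pigeonhole bound $n>m(p^e-1)$ uniform in $e$. Once the Skoda formula is in hand, the deduction of the proposition is routine.
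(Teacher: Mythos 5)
The paper does not actually prove Proposition~\ref{3.4(2)}; it quotes it from \cite{Blickle-Mustata-Smith-1} with the remark that the proof given there uses only condition (*). Your proposal is a correct reconstruction of that argument, via the Skoda-type identity $\tau(\mathfrak a^c)=\mathfrak a\,\tau(\mathfrak a^{c-1})$ for $c\ge m$. The deduction of the proposition from this identity is right, and your sketch of the identity itself is sound: the three properties of $I_e$ you use --- additivity over finite sums, the identity $I_e(J^{[p^e]}\mathfrak b)=J\,I_e(\mathfrak b)$, and monotonicity --- do all follow from condition (*). Monotonicity and additivity are formal consequences of the definition of $I_e$ together with $I_e(\mathfrak a)^{[p^e]}\supseteq\mathfrak a$; the containment $J\,I_e(\mathfrak b)\subseteq I_e(J^{[p^e]}\mathfrak b)$ reduces (by additivity) to the principal case, where it follows from $(I:f)^{[p^e]}=I^{[p^e]}:f^{p^e}$, i.e.\ from flatness of Frobenius over a regular ring, which is part of (*). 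Your pigeonhole bound is also uniform in $e$ as you say, since $c\ge m$ forces $n=\lceil cp^e\rceil\ge mp^e>m(p^e-1)$. The one slip is the attribution: \cite[3.4(1)]{Blickle-Mustata-Smith-1} is the statement that $pc$ is a jumping coefficient whenever $c$ is (Proposition~\ref{Proposition: pc is a jumping coefficient} in this paper), not the Skoda formula, which is a separate, earlier result of \cite{Blickle-Mustata-Smith-1}. This is a labelling error only; the mathematics is correct and matches the argument the paper is implicitly relying on.
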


And finally, we quote a result from \cite{Blickle-Mustata-Smith-1} that requires a new proof.

\begin{prop}\label{Proposition: pc is a jumping coefficient}
\cite[3.4(1)]{Blickle-Mustata-Smith-1} Assume $R$ satisfies condition (*) and let $\mathfrak a$ be an ideal of $R$. If $c$ is an $F$-jumping coefficient of $\mathfrak{a}$, so is $p c$.
\end{prop}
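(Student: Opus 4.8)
The plan is to reduce the statement to a contrapositive argument built on the identity
\[
\tau(\mathfrak a^{c})=I_1\bigl(\tau(\mathfrak a^{pc})\bigr),
\]
valid for every non-negative real number $c$. Granting this identity, suppose that $pc$ is \emph{not} an $F$-jumping coefficient of $\mathfrak a$. Then there is an $\epsilon>0$ with $\tau(\mathfrak a^{pc-\epsilon})=\tau(\mathfrak a^{pc})$, and since $\tau(\mathfrak a^{c'})$ is a (weakly) decreasing function of $c'$ (as is clear from the definition), the same equality persists when $\epsilon$ is shrunk, so we may assume $0<\epsilon<pc$ and hence $c-\epsilon/p>0$. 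Substituting $c-\epsilon/p$ for $c$ in the identity and using $p\,(c-\epsilon/p)=pc-\epsilon$ gives
\[
\tau(\mathfrak a^{\,c-\epsilon/p})=I_1\bigl(\tau(\mathfrak a^{\,pc-\epsilon})\bigr)=I_1\bigl(\tau(\mathfrak a^{\,pc})\bigr)=\tau(\mathfrak a^{\,c}),
\]
which contradicts the assumption that $c$ is an $F$-jumping coefficient. Hence $pc$ must be an $F$-jumping coefficient of $\mathfrak a$.

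To establish the identity I would first record the composition law $I_{e+1}(\mathfrak b)=I_1\bigl(I_e(\mathfrak b)\bigr)$ for an arbitrary ideal $\mathfrak b$ of $R$: since $I^{[p^{e+1}]}=\bigl(I^{[p]}\bigr)^{[p^{e}]}$, condition (*) (which makes $I_e(\mathfrak b)$ the unique smallest ideal whose $p^{e}$-th power contains $\mathfrak b$) shows that $I^{[p^{e+1}]}\supset\mathfrak b$ is equivalent to $I^{[p]}\supset I_e(\mathfrak b)$, which in turn is equivalent to $I\supset I_1\bigl(I_e(\mathfrak b)\bigr)$; taking the smallest such $I$ proves the law. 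Now fix an exponent $e$ large enough that both $\tau(\mathfrak a^{pc})=\bigl(\mathfrak a^{\lceil (pc)p^{e}\rceil}\bigr)^{[1/p^{e}]}$ and $\tau(\mathfrak a^{c})=\bigl(\mathfrak a^{\lceil cp^{e+1}\rceil}\bigr)^{[1/p^{e+1}]}$ hold, which is possible because each holds for all sufficiently large $e$ by definition of the generalized test ideal. Since $\lceil (pc)p^{e}\rceil=\lceil cp^{e+1}\rceil$, put $\mathfrak b=\mathfrak a^{\lceil cp^{e+1}\rceil}$; then $\tau(\mathfrak a^{pc})=\mathfrak b^{[1/p^{e}]}=I_e(\mathfrak b)$ and $\tau(\mathfrak a^{c})=\mathfrak b^{[1/p^{e+1}]}=I_{e+1}(\mathfrak b)$, and the composition law gives $\tau(\mathfrak a^{c})=I_{e+1}(\mathfrak b)=I_1\bigl(I_e(\mathfrak b)\bigr)=I_1\bigl(\tau(\mathfrak a^{pc})\bigr)$, as wanted.

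The whole argument is essentially formal once the identity is in place, so I do not expect a serious obstacle; the one place that requires attention is the bookkeeping with the ceiling functions — one has to distinguish $\lceil cp^{e}\rceil$ from $\lceil cp^{e+1}\rceil$ and commit to a single exponent $e$ at which both stabilization statements in the definition of $\tau(\mathfrak a^{c})$ are already valid. It is worth noting that, in contrast with the treatment in \cite{Blickle-Mustata-Smith-1}, this proof uses only condition (*): neither $F$-finiteness nor any exactness property of the Frobenius enters.
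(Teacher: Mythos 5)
Your proposal is correct, and it follows the same overall strategy as the paper's own proof: establish the identity $\tau(\mathfrak a^c) = I_1\bigl(\tau(\mathfrak a^{pc})\bigr)$ for every $c\geq 0$, then deduce the result by a contrapositive argument. The difference lies in how the identity is derived. The paper first shows by direct computation that $\tau(\mathfrak a^{pc}) \subset \tau(\mathfrak a^c)^{[p]}$, so that $I_1\bigl(\tau(\mathfrak a^{pc})\bigr) \subset \tau(\mathfrak a^c)$, and then obtains the reverse containment by contradiction, choosing a strictly smaller ideal $\mathfrak b$ with $\tau(\mathfrak a^{pc})\subset\mathfrak b^{[p]}$ and deriving $\tau(\mathfrak a^c)\subset\mathfrak b$; it then says ``similarly'' to transfer the identity to $c-\epsilon/p$. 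You instead prove the identity in one stroke from the composition law $I_{e+1}(\mathfrak b)=I_1\bigl(I_e(\mathfrak b)\bigr)$, which you derive from condition (*) by observing that the sets $\{I : I^{[p^{e+1}]}\supset\mathfrak b\}$ and $\{I : I^{[p]}\supset I_e(\mathfrak b)\}$ coincide. This is cleaner, and because your identity is stated once for all $c\geq 0$, you avoid the paper's second pass for the shifted parameter. Both arguments use exactly the same resources --- condition (*) plus the Noetherian stabilization in the definition of $\tau$ --- so neither is more general, but yours is the more streamlined presentation, and the composition law you isolate is a reusable fact (implicit in \cite{Blickle-Mustata-Smith-1} but not invoked by the paper at this point). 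Your careful shrinking of $\epsilon$ to keep $c-\epsilon/p$ positive, using the monotonicity of $\tau$, is a detail the paper glosses over and is handled properly in your writeup.
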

\begin{proof}
Assume that $c$ is an $F$-jumping coefficient of $\mathfrak{a}$, i.e.
$\tau(\mathfrak{a}^c) \subsetneq \tau(\mathfrak{a}^{c-\epsilon})$ for all $\epsilon>0$.
Choose an integer $e$ so large that both
$\tau( \mathfrak{a}^c )=I_{e+1} \left( \mathfrak{a}^{\lceil c p^{e+1} \rceil}\right)$ and
$\tau( \mathfrak{a}^{pc} )=I_e \left( \mathfrak{a}^{\lceil c p^{e+1} \rceil}\right)$.
Since
$$\mathfrak{a}^{\lceil c p^{e+1} \rceil}\subset
I_{e+1} \left( \mathfrak{a}^{\lceil c p^{e+1} \rceil}\right)^{[p^{e+1}]}=
\tau\left( \mathfrak{a}^c \right)^{[p^{e+1}]}=
\left(\left(\tau \left(\mathfrak{a}^c\right)\right)^{[p]} \right)^{[p^{e}]},$$
we see that
$$\tau(\mathfrak{a}^{pc})=
I_e \left( \mathfrak{a}^{\lceil c p^{e+1} \rceil}\right)\subset
\left(\tau \left(\mathfrak{a}^c\right) \right)^{[p]} $$
and so
$$I_1\left( \tau(\mathfrak{a}^{pc}) \right)\subset \tau \left(\mathfrak{a}^c\right) .$$
In fact $$I_1 \left(\tau(\mathfrak{a}^{pc})  \right) = \tau \left(\mathfrak{a}^c\right),$$
otherwise choose an ideal $\mathfrak{b}\subset R$ strictly smaller than
$\tau \left(\mathfrak{a}^c\right) $ for which
$\tau(\mathfrak{a}^{pc})\subset \mathfrak{b}^{[p]}$. But then for all large $e$ we must have
$$\mathfrak{a}^{\lceil c p^{e+1} \rceil} \subset \tau( \mathfrak{a}^{pc} )^{[p^e]} \subset
\mathfrak{b}^{[p^{e+1}]}$$
hence
$$\tau( \mathfrak{a}^c )=
I_{e+1} \left( \mathfrak{a}^{\lceil c p^{e+1} \rceil}\right) \subset
I_{e+1} \left( \mathfrak{b}^{[p^{e+1}]} \right)= \mathfrak{b} \subsetneq \tau \left(\mathfrak{a}^c\right), $$
a contradiction.
Similarly we can show that
$$I_1 \left( \tau(\mathfrak{a}^{p(c-\epsilon/p)})  \right) = \tau \left(\mathfrak{a}^{c-\epsilon}\right) $$ for every $\epsilon>0$.
Now, if $pc$ is not an $F$-jumping coefficient, then for some $\epsilon>0$ we have
$$\tau\left(\mathfrak{a}^c\right) = I_1\left( \tau(\mathfrak{a}^{pc})  \right)= I_1 \left(\tau(\mathfrak{a}^{p(c-\epsilon/p)})\right)=
\tau\left(\mathfrak{a}^{c-\epsilon/p}\right)$$
contradicting the fact that $c$ is an $F$-jumping coefficient.
\end{proof}

\section{A necessary and sufficient condition for the rationality and discreteness of $F$-jumping coefficients}
\label{Section: RD}
The main result of this section is the following

\begin{thm}\label{mainRD}
Assume $R$ satisfies condition (*). If the set of the $F$-jumping coefficients of $\mathfrak a$ has no rational accumulation points, then the set of the $F$-jumping coefficients of $\mathfrak a$ is discrete and every $F$-jumping coefficient of $\mathfrak a$ is rational.
\end{thm}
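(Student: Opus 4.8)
Write $S$ for the set of $F$-jumping coefficients of $\mathfrak a$ and let $m$ be the number of generators of $\mathfrak a$ (we may assume $\mathfrak a\neq 0$, so $m\geq1$). The plan is to prove $S$ is discrete first; rationality will then follow almost for free. I would record four facts. (1) $S$ is closed in $\mathbb R$ (Lemma~\ref{closed}). (2) $S$ is right-discrete: by Corollary~\ref{2.16} each $c\in S$ has some $\epsilon_c>0$ with $(c,c+\epsilon_c)\cap S=\emptyset$, and as these nonempty open intervals are pairwise disjoint, $S$ is countable; being also closed it is therefore scattered (it has no nonempty perfect subset). (3) $S$ is stable under $c\mapsto pc$ (Proposition~\ref{Proposition: pc is a jumping coefficient}) and, for $c>m$, under $c\mapsto c-1$ (Proposition~\ref{3.4(2)}). (4) Hence if $c\in S$ is irrational then---iterating the first operation and then subtracting $1$ repeatedly---the point $\{p^kc\}+m$ (here $\{x\}$ denotes the fractional part of $x$) lies in $S\cap(m,m+1)$ for all large $k$, and these points are pairwise distinct for infinitely many $k$, since $\{p^kc\}=\{p^jc\}$ would force $c\in\mathbb Q$. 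In particular an irrational element of $S$ yields infinitely many jumping coefficients in the bounded interval $(m,m+1)$, hence an accumulation point of $S$; so once $S$ is known to be discrete, $S\subseteq\mathbb Q$ follows at once.

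It remains to prove: if $S$ has no rational accumulation points then $S'=\emptyset$. Assume not and put $A:=S'\neq\emptyset$; by hypothesis every element of $A$ is irrational. Taking limits of distinct jumping coefficients, $A$ too is stable under $c\mapsto pc$ and, for $c>m$, under $c\mapsto c-1$. I would then observe that these stability properties are inherited by \emph{every} Cantor--Bendixson derivative $A^{(\beta)}$ of $A$: both maps are homeomorphisms of $\mathbb R$, so they commute with the derived-set operator, and the inclusions $pA\subseteq A$ and $A\cap(m,\infty)\subseteq A+1$ survive because the derived-set operator is monotone under inclusion and localizes on the open set $(m,\infty)$. Since $A^{(\beta)}\subseteq A$, each $A^{(\beta)}$ consists of irrationals, so fact (4) applies to it verbatim.

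A transfinite induction then closes the argument. As $A$ is scattered there is a least ordinal $\gamma$ with $A^{(\gamma)}=\emptyset$, and $A\neq\emptyset$ gives $\gamma\geq1$. The induction engine: whenever $A^{(\beta)}\neq\emptyset$, choosing an irrational $c\in A^{(\beta)}$ and applying fact (4) to $A^{(\beta)}$ shows that the infinitely many distinct points $\{p^kc\}+m$ (large $k$) lie in $A^{(\beta)}\cap(m,m+1)$; this set is infinite and bounded, so it has an accumulation point $\mu\in[m,m+1]$, and $\mu\in(A^{(\beta)})'=A^{(\beta+1)}$---so both $A^{(\beta)}\cap[m,m+1]$ and $A^{(\beta+1)}\cap[m,m+1]$ are nonempty. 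At a limit ordinal $\lambda\leq\gamma$, $A^{(\lambda)}\cap[m,m+1]=\bigcap_{\beta<\lambda}\bigl(A^{(\beta)}\cap[m,m+1]\bigr)$ is a nested intersection of nonempty closed subsets of the compact interval $[m,m+1]$, hence nonempty. Since $A^{(\beta)}\neq\emptyset$ for every $\beta<\gamma$ (minimality of $\gamma$), these two steps together give $A^{(\gamma)}\cap[m,m+1]\neq\emptyset$, contradicting $A^{(\gamma)}=\emptyset$. Therefore $A=\emptyset$: $S$ is discrete, and then $S\subseteq\mathbb Q$ by fact (4).

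I expect the transfinite step to be the main obstacle. One has to keep the ``multiply by $p$ / subtract $1$'' stability alive at every Cantor--Bendixson level, so that an irrational point keeps producing a fresh accumulation point one level higher, and then play this against the purely topological fact that a right-discrete closed set is scattered, which forces the tower of derivatives to halt. This collision is exactly where the hypothesis ``no rational accumulation points'' enters: it is what keeps each $A^{(\beta)}$ entirely irrational, so that fact (4) can be reused at every stage. Without that hypothesis the soft structure of $S$ by itself does not exclude an accumulation set resembling a ``base-$p$'' subshift of irrational numbers, against which the present argument produces no contradiction.
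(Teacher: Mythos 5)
Your proof is correct, and it reaches the same destination as the paper's by a structurally different but closely related route. The paper packages the argument into an abstract statement (Proposition~\ref{Proposition: properties of $C$}): the set $C$ of accumulation points of $F$-jumping coefficients, taken in $[0,m]$ and reduced by the ``mod $m$'' map, is closed, right-discrete, $\{p^e\cdot\}$-stable, and irrational, and such a set must be empty. To prove that proposition, the paper runs Zorn's Lemma on the family of nonempty subsets of $C$ with those three properties (ordered by reverse inclusion), producing a $\preceq$-maximal (i.e., minimal) $M$; it then shows $M'$ also has the three properties but is strictly smaller (since $\min M$ is isolated), so $M'=\emptyset$, forcing $M$ to be finite and contradicting Lemma~\ref{inf}. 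Your version unwinds that Zorn argument into an explicit Cantor--Bendixson transfinite induction: you observe that $A=S'$ is scattered (countable closed set), show every derivative $A^{(\beta)}$ inherits the two stability maps because they are homeomorphisms that commute with the derived-set operator and because the derived-set operator localizes on the open set $(m,\infty)$, and at each stage produce a fresh accumulation point in the compact window $[m,m+1]$ via Lemma~\ref{inf}, with compactness saving the limit ordinals. This is essentially what the paper's sup-of-infima argument for chains and ``$M'=\emptyset\Rightarrow M$ finite'' step accomplish, but laid out step by step; it has the advantage of avoiding Zorn and making the role of scatteredness explicit, at the cost of carrying the inheritance lemma for the stability maps through every ordinal. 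One small stylistic difference: the paper reduces modulo $m$ to land in $[0,m)$, whereas you stop the $\alpha\mapsto\alpha-1$ iteration one step earlier and land in $(m,m+1)$ --- either is fine for the compactness argument, but your choice is the one that Proposition~\ref{3.4(2)} literally delivers. Both proofs then close out rationality identically: an irrational jumping coefficient would, via Lemma~\ref{inf}, dump infinitely many jumping coefficients into a bounded interval and force an accumulation point, contradicting the discreteness just established.
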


Assume $\mathfrak a$ is generated by $m$ elements. We denote the modulo $m$ part of a real number $s$ with $\{ s \}$, that is $\{s\}=s-m[\frac{s}{m}]\in [0,m)$, where $[x]$ denotes the integer part of $x$.

\begin{lem}\label{inf}
If $s$ is an irrational number, the set $\{ \{p^e s\} \,|\, e\geq 1 \}$ is infinite.
\end{lem}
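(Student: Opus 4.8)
The plan is to prove the lemma by contradiction, using only the pigeonhole principle. Assume the set $\{\{p^e s\}\mid e\geq 1\}$ is finite. Since the values $\{p^e s\}$, indexed by the infinitely many $e\geq 1$, all lie in this finite set, two of them must coincide: there are integers $1\leq e_1<e_2$ with $\{p^{e_1}s\}=\{p^{e_2}s\}$.

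Next I would simply unwind the definition of $\{\,\cdot\,\}$. Writing $\{p^{e}s\}=p^{e}s-m[p^{e}s/m]$ and equating the expressions for $e=e_1$ and $e=e_2$ gives
$$ (p^{e_2}-p^{e_1})\,s \;=\; m\left([p^{e_2}s/m]-[p^{e_1}s/m]\right). $$
The right-hand side is an integer, while the coefficient $p^{e_2}-p^{e_1}$ on the left is a nonzero integer; dividing, we conclude that $s$ is rational, contradicting the hypothesis. Hence the set must be infinite.

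I expect no real obstacle here. Conceptually the statement is the familiar fact that $x\mapsto\{px\}$ is, up to rescaling by $m$, the one-step shift on the base-$p$ expansion of $x/m$, and an irrational number has a base-$p$ expansion that is not eventually periodic, so its forward orbit under this shift cannot be finite; the pigeonhole argument above is just the clean way to package this. The only thing to watch is the bookkeeping with the ``mod $m$'' normalization, but since $[\,\cdot\,]$ always returns an integer, nothing goes wrong. (In the intended application, where $\mathfrak a$ is principal, one has $m=1$ and $\{\,\cdot\,\}$ is the ordinary fractional part; the lemma then allows one to convert an irrational $F$-jumping coefficient into infinitely many distinct $F$-jumping coefficients lying in a bounded interval, in combination with Propositions \ref{3.4(2)} and \ref{Proposition: pc is a jumping coefficient}, which is what makes it useful toward Theorem \ref{mainRD}.)
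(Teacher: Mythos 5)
Your proof is correct. You and the paper both start from the pigeonhole step — a finite orbit forces $\{p^{e_1}s\}=\{p^{e_2}s\}$ for some $e_1<e_2$ — but you then diverge on how to conclude that $s$ is rational. The paper passes to the base-$p$ expansion of $s$: equality of $\{p^{e_1}s\}$ and $\{p^{e_2}s\}$ means the tail of digits beyond position $e_1$ repeats with period $e_2-e_1$, so the expansion is eventually periodic and $s\in\mathbb{Q}$. You instead unwind the definition $\{x\}=x-m[x/m]$ directly and obtain the linear relation $(p^{e_2}-p^{e_1})s=m\bigl([p^{e_2}s/m]-[p^{e_1}s/m]\bigr)$, whose right-hand side is an integer, giving $s\in\mathbb{Q}$ immediately. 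Your route is a bit more elementary, since it avoids invoking the characterization of rationals by eventually periodic base-$p$ expansions, and it handles the ``mod $m$'' normalization in the most transparent way; the paper's route makes the shift-dynamics picture explicit, which aligns with the conceptual remark you also make. One small aside: in the proof of Theorem \ref{mainRD} the lemma is applied with $\mathfrak{a}$ generated by $m$ elements in general, not just $m=1$; the specialization to principal ideals only happens at the level of Theorem \ref{main}. This does not affect the validity of your argument, which works for any positive integer $m$.
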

\begin{proof}
Let $0.s_1 s_2  \dots $ be the fractional part of the base $p$ expansion of $s$.
Then the fractional part of $\{p^e s\}$ is $0. s_{e+1} s_{e+2} \dots$. If
$\{p^e s\}=\{p^j s\}$ for some $e<j$, then
the base $p$ expansion of $s$ is eventually periodic with period $s_{e+1} s_{e+2} \dots s_{j}$ of length
$j-e$ starting at the $e+1$th digit.
\end{proof}

\begin{prop}\label{Proposition: properties of $C$}
Let $C\subset [0,m]\setminus\mathbb{Q}$ be a set with the following properties:
\begin{enumerate}
\item [(a)] for all $c\in C$ there exists an $\epsilon>0$ such that $(c,c+\epsilon) \cap C=\emptyset$
\item [(b)] $C$ is closed in $[0,m]$,
\item [(c)] $\{p^e c\}\in C$ for all $c\in C$ and all $e\geq 1$.
\end{enumerate}
Then $C$ is empty.
\end{prop}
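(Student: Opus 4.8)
Suppose, for contradiction, that $C$ is nonempty. The plan is to extract from $C$ an element whose orbit under multiplication-by-$p$ (mod $m$) produces an increasing sequence converging to a point of $C$, contradicting property (a) together with closedness. First I would observe that since $C\subset[0,m]$ is closed and nonempty, and hence compact, it has a largest element, call it $c_{\max}$. Since $c_{\max}\in C\subset[0,m]\setminus\mathbb{Q}$, it is irrational, so by Lemma \ref{inf} the set $\{\{p^e c_{\max}\}\mid e\geq 1\}$ is infinite. By property (c) every one of these values lies in $C$, so $C$ is an infinite compact set and therefore has an accumulation point $\lambda\in C$.

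Now I would use property (a) to force the approach to $\lambda$ to come strictly from below. Since $\lambda$ is an accumulation point of $C$, there is a sequence of \emph{distinct} elements of $C$ converging to $\lambda$; property (a) forbids any element of $C$ from being a left endpoint of a gap, i.e. it forbids elements of $C$ accumulating to $\lambda$ from the right, so infinitely many of these elements lie in $(\lambda-\delta,\lambda)$ for every $\delta>0$. Passing to a subsequence I get a strictly increasing sequence $\lambda_1<\lambda_2<\cdots$ in $C$ with $\lambda_n\to\lambda$. So far this merely re-derives that $C$ has no isolated accumulation-from-the-right behavior; the real work is to turn this into a genuine contradiction, since a priori nothing yet says $C$ is empty rather than, say, a Cantor-type set sitting inside $[0,m]$.

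The key idea, and the step I expect to be the main obstacle, is to exploit property (c) more cleverly: the map $c\mapsto\{pc\}$ is (piecewise) an expanding map on $[0,m)$ with constant derivative $p>1$ on each of the $m$ intervals $[k,k+1)$, $k=0,\dots,m-1$, where it restricts to $c\mapsto pc-kp+(\text{something})$—more precisely $\{pc\}$ acts as $pc$ reduced mod $m$. So I would argue as follows. Take the largest element $c_{\max}$ of $C$ again. For each $e$, $\{p^e c_{\max}\}\in C\subset[0,c_{\max}]$, with equality $\{p^e c_{\max}\}=c_{\max}$ impossible for $e\geq 1$ by irrationality (Lemma \ref{inf}'s proof shows the $p$-adic digits are not eventually periodic, so in particular $\{p^e c_{\max}\}\neq\{p^0 c_{\max}\}=\{c_{\max}\}$). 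Hence $\{p^e c_{\max}\}<c_{\max}$ strictly for all $e\geq 1$. Now apply property (a) at $c_{\max}$: there is $\epsilon>0$ with $(c_{\max},c_{\max}+\epsilon)\cap C=\emptyset$; this is vacuous since everything is $\leq c_{\max}$. Instead the usable consequence is: since $C$ is closed and bounded above by $c_{\max}$ with $c_{\max}\in C$, consider the \emph{infimum} behavior—better, iterate. I would instead choose $c_{\max}$ and look at its orbit closure $\overline{O}$, a closed $p$-invariant subset of $C$ with largest element $c_{\max}$; pick the largest accumulation point $\mu$ of the orbit. Since the orbit is infinite and contained in the compact $[0,c_{\max}]$, $\mu$ exists, $\mu\in C$, and there is a sequence $\{p^{e_k}c_{\max}\}\to\mu$; property (a) at $\mu$ forces these to approach from below (for large $k$), giving $\{p^{e_k}c_{\max}\}<\mu$. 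But then $\{p^{e_k+1}c_{\max}\}=\{p\cdot\{p^{e_k}c_{\max}\}\}$; on the interval just below $\mu$ the map $c\mapsto\{pc\}$ is continuous with slope $p$ provided $(\mu-\delta,\mu)$ avoids the integer breakpoints, which it does for small $\delta$ unless $\mu$ is an integer—impossible since $\mu$ is irrational. Therefore $\{p^{e_k+1}c_{\max}\}\to\{p\mu\}$ and, because slope $p>1$, the points $\{p^{e_k+1}c_{\max}\}$ are pushed a definite distance (roughly $p$ times farther) below $\{p\mu\}$; iterating this expansion $N$ times sends points that started within $\delta$ of $\mu$ to distance $\gtrsim p^N\delta$ below $\{p^N\mu\}$, eventually exiting any fixed neighborhood—forcing the orbit of a point near $\mu$ to leave $[0,c_{\max}]$ or to hit a breakpoint; tracking this carefully produces an element of $C$ in the forbidden right-gap of some element of $C$, the desired contradiction. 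Thus $C=\emptyset$.
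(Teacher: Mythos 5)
Your opening moves are sound and do overlap with the paper's: $C$ compact so it has a largest element, Lemma~\ref{inf} forces the orbit to be infinite, hence $C$ has accumulation points, and property~(a) forces any sequence in $C$ converging to a point of $C$ to do so from below. But from there you depart from the paper's route and the argument does not close. The paper's proof is a Cantor--Bendixson-type argument packaged via Zorn's Lemma: one takes a minimal nonempty closed $p$-invariant subset $M$ of $C$ still satisfying (a)--(c), observes that the derived set $M'$ again satisfies (a)--(c) and is a \emph{proper} subset of $M$ (the minimum of $M$ is isolated, by (a) on the right and minimality on the left), so by minimality $M'=\emptyset$, hence $M$ is finite, and then Lemma~\ref{inf} rules out a finite nonempty $p$-invariant set of irrationals. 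That is a complete argument. Your proposal replaces this with an expansion-dynamics argument and, by your own admission, leaves the decisive step unverified.

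The concrete gap is at the end: ``forcing the orbit of a point near $\mu$ to leave $[0,c_{\max}]$ or to hit a breakpoint; tracking this carefully produces an element of $C$ in the forbidden right-gap of some element of $C$.'' Neither alternative actually yields a contradiction as stated. The orbit never leaves $[0,c_{\max}]$, since by (c) it stays in $C\subset[0,c_{\max}]$. And when the orbit point $y_j=\{p^{e_k+j}c_{\max}\}$ and $\{p^j\mu\}$ are finally separated by a breakpoint (a multiple of $m/p$), the next iterate of $y_j$ does not land ``just above'' anything you control: it jumps to a value near $m$ minus $p\cdot(\text{overshoot past the breakpoint})$, and the size of that overshoot is governed by the distance $d_j$ from $\{p^j\mu\}$ to the breakpoint, a quantity you have no lower bound for along the orbit of the irrational $\mu$. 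You would need to produce, for every $\epsilon>0$, a $C$-point $c$ and another $C$-point in $(c,c+\epsilon)$ with that \emph{specific} $c$'s $\epsilon$ from (a) in hand -- but (a) gives a different $\epsilon$ for each $c$, and nothing in the sketch pins down a single $c$ whose right-gap gets violated. Without a quantitative handle on $d_j$ (equivalently, on how close $\{p^j\mu\}$ comes to the breakpoints), the ``tracking carefully'' cannot be carried out, and the argument as written is a heuristic, not a proof. I would recommend adopting the paper's Zorn/Cantor--Bendixson reduction to the finite case, where Lemma~\ref{inf} finishes cleanly, rather than trying to repair the expansion estimate.
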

\begin{proof}
Assume that $C\neq \emptyset$ and let $\mathcal{C}$ be the collection of non-empty subsets of $C$ which satisfy the three properties above.
Endow $\mathcal{C}$ with a partial order $\preceq$ defined by $A\preceq B \Leftrightarrow A\supseteq B$ for all $A,B\in \mathcal{C}$.

Let $\mathcal{D}$ be a chain in $(\mathcal{C}\preceq)$; we now show that
$\displaystyle C_0=\cap_{D\in \mathcal{D}} D \in \mathcal{C}$. Clearly, $C_0$ satisfies properties (a), (b) and (c) above, so we just need to show that
$C_0\neq \emptyset$.

For each $D\in \mathcal{D}$ let $x_D=\inf(D)\in D$ and let $x=\sup \{ x_D \,|\, D\in \mathcal{D} \}$; we show that $x\in C_0$.
Assume that $x\notin D^\prime$ for some $D^\prime\in \mathcal{D}$. Since $D^\prime$ is closed, there exists an $\epsilon>0$ such that
$(x-\epsilon,x+\epsilon)\cap D^\prime =\emptyset$;
also, since $x_{D^\prime}\leq x$, we deduce that $x_{D^\prime}\leq x-\epsilon$.
Now pick any $D\in \mathcal{D}$; either
$D\subset D^\prime$, in which case $(x-\epsilon,x+\epsilon)\cap D =\emptyset$ or
$D\supseteq D^\prime$, in which case $x_D\leq x_{D^\prime}\leq x-\epsilon$ and in either case
$x_D\notin (x-\epsilon, x]$ for all $D\in \mathcal{D}$, which is impossible.

Apply now Zorn's Lemma to obtain a maximal element $M\in \mathcal{C}$ with respect to $\preceq$, i.e.,
a minimal element in $\mathcal{C}$ with respect to inclusion.
Let $M^\prime$ be the set of accumulation points of $M$.
Clearly $M^\prime$ satisfies property (a) above by virtue of being a subset of $M$, it satisfies (b) because it is a set of accumulation points
and it satisfies (c)
because the  functions $ x \mapsto \{ p^e x \}$ are continuous on the set of the irrational points of $[0,m]$, hence they map the set of accumulation points of $M$ to itself.

Since $M$ is closed there exists a minimal $m\in M$, and property (a) implies that
$m$ is an isolated point of $M$. Now $m\in M\setminus M^\prime$ and the minimality of $M$ implies that $M^\prime\notin\mathcal{C}$, hence
$M^\prime=\emptyset$. We deduce now that $M$ must be finite, and since it satisfies property (c), the previous lemma shows that it contains no irrational points.
We conclude that $M$ is empty, contradicting the fact that $M\in\mathcal{C}$.
\end{proof}

{\it Proof of Theorem \ref{mainRD}.}
Let $C$ denote the set of the accumulation points of the $F$-jumping coefficients of $\mathfrak a$ in the interval $[0,m]$.
This set satisfies the three conditions listed in Proposition \ref{Proposition: properties of $C$}:
(a) holds because of Corollary \ref{2.16}, (b) holds because the set of the $F$-jumping coefficients of $\mathfrak a$ is closed (see Lemma \ref{closed}) and the set of the accumulation points of a closed set is closed and
(c) follows from Propositions \ref{3.4(2)} and \ref{Proposition: pc is a jumping coefficient}. By assumption, all points of $C$ are irrational. Hence Proposition \ref{Proposition: properties of $C$} implies that $C$ is empty, i.e. the set of the $F$-jumping coefficients of $\mathfrak a$ is discrete. 

If there were an irrational $F$-jumping coefficient $s$, then each $\{p^es\}$ also would be an $F$-jumping coefficient by Propositions \ref{3.4(2)} and \ref{Proposition: pc is a jumping coefficient}. Hence by Lemma \ref{inf} there would be infinitely many $F$-jumping coefficients in the interval $[0,m]$ and therefore the set of the accumulation points would be non-empty, contrary to what has just been shown. This means that the $F$-jumping coefficients are all rational.
\qed

\section{Excellent regular local rings}
\label{excellent}

It was shown in \cite{Blickle-Mustata-Smith-1} that regular $F$-finite rings satisfy condition (*), and it was shown in \cite{Katzman} that complete regular local rings satisfy condition (*). In this section we show that in the local case the condition of $F$-finiteness and the condition of completeness can be relaxed. Namely, the main result of this section is 

\begin{thm}\label{exc}
An excellent regular local ring $R$ satisfies condition (*).
\end{thm}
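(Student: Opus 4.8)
**Plan for proving Theorem 4.2 (excellent regular local ⇒ condition (*))**

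The goal is to show that for an excellent regular local ring $R$, every ideal $\mathfrak a$ and every $e$, the set of ideals $I$ with $I^{[p^e]}\supset\mathfrak a$ has a unique minimal element. The natural strategy is a faithful-flat descent from the completion $\widehat R$, for which condition (*) is already known (Katzman's result cited at the start of Section 4, or by $F$-finiteness of the complete case via Cohen structure theory — in fact $\widehat R$ need not be $F$-finite, so one uses the complete-regular-local case directly).

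The plan is as follows. First I would reduce to checking a single property: it suffices to show that for all $\mathfrak a$ and all $e$ one has $I_e(\mathfrak a)^{[p^e]}\supset\mathfrak a$, where $I_e(\mathfrak a)$ is defined as the intersection of all $I$ with $I^{[p^e]}\supset\mathfrak a$; equivalently, that the class of such $I$ is closed under (arbitrary, hence finite) intersection. By Noetherianity it is enough to handle the intersection of two such ideals $I_1,I_2$, i.e. to prove $(I_1\cap I_2)^{[p^e]}\supset\mathfrak a$ whenever $I_1^{[p^e]}\supset\mathfrak a$ and $I_2^{[p^e]}\supset\mathfrak a$; even better, it suffices to show $(I_1\cap I_2)^{[p^e]}=I_1^{[p^e]}\cap I_2^{[p^e]}$ for all ideals, since then the containment is automatic. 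Second, I would pass to the completion: $\widehat R$ is a faithfully flat $R$-algebra, completion commutes with the Frobenius power operation in the sense that $(\mathfrak b^{[p^e]})\widehat R=(\mathfrak b\widehat R)^{[p^e]}$ and with finite intersections, $(\mathfrak b\cap\mathfrak c)\widehat R=\mathfrak b\widehat R\cap\mathfrak c\widehat R$, and an inclusion of ideals of $R$ can be tested after $\otimes_R\widehat R$. Since $\widehat R$ is a complete regular local ring it satisfies (*), so $\bigl((I_1\widehat R)\cap(I_2\widehat R)\bigr)^{[p^e]}=(I_1\widehat R)^{[p^e]}\cap(I_2\widehat R)^{[p^e]}$; translating back through the four compatibilities above gives $(I_1\cap I_2)^{[p^e]}\widehat R=\bigl(I_1^{[p^e]}\cap I_2^{[p^e]}\bigr)\widehat R$, and faithful flatness yields the equality already in $R$. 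Then $(I_1\cap I_2)^{[p^e]}=I_1^{[p^e]}\cap I_2^{[p^e]}\supset\mathfrak a$, as needed.

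The step I expect to be the genuine obstacle — and the only place excellence (rather than mere regularity plus Noetherian completion) is really used — is the compatibility of Frobenius powers with completion, namely $(\mathfrak b\widehat R)^{[p^e]}=(\mathfrak b^{[p^e]})\widehat R$ as ideals of $\widehat R$. One inclusion, $(\mathfrak b^{[p^e]})\widehat R\subseteq(\mathfrak b\widehat R)^{[p^e]}$, is trivial from generators. For the reverse inclusion the issue is that $(\mathfrak b\widehat R)^{[p^e]}$ is generated by $p^e$-th powers of \emph{all} elements of $\mathfrak b\widehat R$, not just of $\mathfrak b$; one must show the extra generators are already accounted for. The clean way is to invoke flatness of the Frobenius endomorphism $F\colon R\to R$ — available because $R$ is regular (Kunz) — which makes $R$ a flat, indeed (by excellence) \emph{well-behaved} $R$-module via Frobenius, and more to the point makes $F$ compatible with base change to $\widehat R$: here excellence guarantees that $\widehat R$ is regular (so Kunz applies to $\widehat R$ too) and that the formal fibres are nice enough that the Frobenius of $R$ and of $\widehat R$ fit into a base-change square. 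Concretely, $\mathfrak b^{[p^e]}=\mathfrak b\otimes_{R,F^e}R$ viewed inside $R$ (using flatness of $F^e$ to identify this with an ideal), and then $(\mathfrak b^{[p^e]})\widehat R = (\mathfrak b\otimes_{R,F^e}R)\otimes_R\widehat R = \mathfrak b\otimes_{R,F^e}\widehat R = (\mathfrak b\widehat R)\otimes_{\widehat R,F^e}\widehat R = (\mathfrak b\widehat R)^{[p^e]}$, where the middle identifications use that $\widehat R\otimes_{R,F^e}R=\widehat R$ as $R$-algebras via Frobenius, which is exactly the excellence-powered statement that Frobenius commutes with completion for $R$. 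I would isolate this as a lemma and prove it carefully; the remaining steps are then formal.

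(Remark: one could alternatively avoid the tensor-product bookkeeping and argue the reverse inclusion $(\mathfrak b\widehat R)^{[p^e]}\subseteq(\mathfrak b^{[p^e]})\widehat R$ by a direct approximation argument — write an element of $\mathfrak b\widehat R$ as a limit of elements of $\mathfrak b$, expand the $p^e$-th power, and control the error using that $\mathfrak b^{[p^e]}\widehat R$ is closed in the $\mathfrak m\widehat R$-adic topology — but the flat-Frobenius route is cleaner and makes the role of excellence transparent.)
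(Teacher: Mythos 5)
Your proposal has a genuine and fatal gap at the very first reduction. You assert that ``by Noetherianity'' it suffices to show that the class $\mathcal S=\{I: I^{[p^e]}\supset\mathfrak a\}$ is closed under intersection of \emph{two} ideals, ``(arbitrary, hence finite) intersection.'' This reduction is false: Noetherianity gives ACC, not DCC, and a non-empty set of ideals in a Noetherian ring that is closed under finite intersection need not contain its (infinite) intersection. For instance $\{\mathfrak m^n:n\geq 1\}$ is closed under finite intersections but its intersection is $0$, which is not a member. In fact, the finite-intersection identity $(I_1\cap I_2)^{[p^e]}=I_1^{[p^e]}\cap I_2^{[p^e]}$ holds for \emph{every} regular ring by Kunz (flatness of Frobenius), with no excellence or completeness needed; so if your reduction were correct, condition (*) would hold for every regular local ring, and excellence would play no role whatsoever --- a red flag. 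The whole difficulty of condition (*) is exactly passing from finite to arbitrary intersections, and that is where something nontrivial must happen. In the paper's proof this is handled in the complete case by Chevalley's theorem (every formal intersection of ideals in a complete local ring is $\mathfrak m$-adically approximated by members of the family), which is a genuinely complete-local-ring phenomenon that your plan never invokes.

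You also misidentify where excellence is used. The identity $(\mathfrak b\widehat R)^{[p^e]}=\mathfrak b^{[p^e]}\widehat R$, which you flag as ``the genuine obstacle'' and ``the only place excellence is really used,'' is actually trivial and requires neither flatness of Frobenius nor excellence: $\mathfrak c^{[p^e]}$ is generated by the $p^e$-th powers of any generating set of $\mathfrak c$ (since $(\sum r_ic_i)^{p^e}=\sum r_i^{p^e}c_i^{p^e}$ in characteristic $p$), so extension of ideals commutes with Frobenius powers along any ring homomorphism. Where excellence actually enters in the paper is the \emph{contraction} direction: one takes $H'=I_{e}(\mathfrak a\widehat R)$, an ideal of $\widehat R$ that is \emph{not} assumed to be extended from $R$, sets $H=H'\cap R$, and needs $(H')^{[p^e]}\cap R=H^{[p^e]}$; this is \cite[6.6]{LySm} and is precisely the place excellence (more specifically, control of the formal fibres) is required. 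The paper then shows directly that $H$ is in $\mathcal S$ (via the contraction identity) and is its minimum (by extending any competitor $L$ to $\widehat R$ and using minimality of $H'$ there), without any detour through finite intersections. So your overall idea --- descent from the completion --- is the right one, but both halves of its execution (the reduction step and the identification of the excellence input) are incorrect.
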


\emph{Proof.} First we consider the case that $R$ is complete. This case is done in \cite[5.3]{Katzman}, but the statement is somewhat different; we reproduce a proof to avoid any misunderstandings. Recall that by definition, $I_e(\mathfrak a)$ is the intersection of all the ideals $I$ such that $I^{[p^e]}\supset \mathfrak a$. A well-known theorem of Chevalley (see Theorem 1 in Chapter 5 of \cite{Northcott}) says that if an ideal $J$ is the intersection of some set $S$ of ideals in a complete local ring, then for every positive integer $t$ there exists an ideal $I\in S$ such that $I\subset J+\mathfrak m^t$. Hence for every integer $t$ there is an ideal $I$ such that $I^{[p^e]}\supset \mathfrak a$ and $I\subset I_e(\mathfrak a)+\mathfrak m^t$. Therefore $I^{[p^e]}\subset I_e(\mathfrak a)^{[p^e]}+\mathfrak (m^t)^{[p^e]}\subset I_e(\mathfrak a)^{[p^e]}+\mathfrak m^{tp^e}$ which implies that $\mathfrak a\subset I_e(\mathfrak a)^{[p^e]}+\mathfrak m^{tp^e}$ for every $t$. But the intersection of $I_e(\mathfrak a)^{[p^e]}+\mathfrak m^{tp^e}$, as $t$ runs through all positive integers, is $I_e(\mathfrak a)^{[p^e]}$, hence $\mathfrak a\subset I_e(\mathfrak a)^{[p^e]}$. This completes the case where $R$ is complete.

For the general case, let $\hat R$ denote the completion of $R$ with respect to the maximal ideal. It has just been proven that $I_e(\mathfrak a\hat{R})^{[p^e]}\supset \mathfrak a\hat R$. Let $H'=I_e(\mathfrak a\hat{R})$ and let $H=H'\cap R$. Since $R$ is excellent, \cite[6.6]{LySm} implies $(H')^{[p^e]}\cap R=H^{[p^e]}$, hence $\mathfrak a \subset H^{[p^e]}$. If $L$ is any ideal of $R$ such that $\mathfrak a \subset L^{[p^e]}$, then $\mathfrak a\hat{R}\subset (L\hat{R})^{[p^e]}$. The minimality of $H'$ tells us that $L\hat{R}\supset H'$. But then $L=L\hat{R}\cap R\supset H'\cap R=H$. Therefore, $I_e(\mathfrak a)$ is the unique minimal element of the set of the ideals $I$ of $R$ such that $I^{[p^e]}\supset \mathfrak a$. Thus $R$ satisfies property (*). \qed

\smallskip

It stands to reason that every excellent regular (not necessarily local) ring satisfies condition (*) but this remains to be proven.

Theorem \ref{exc} shows that all the results of the preceding two sections are valid for excellent regular local rings.

\section{Frobenius actions on Artinian modules}
\label{S:Frob}
In this section we introduce the tools that will be used in the next section to prove our main results. In this section and the next $R$ is an excellent regular local ring. We begin with a generalization of a well-known result.
\begin{lem}\label{t-fold}
Let $N$ be an Artinian $R$-module and let $\phi:N\to N$ be an action of the $t$-fold Frobenius, i.e. a morphism of abelian groups such that
$\phi(rm)=r^{p^t}\phi(m)$ for all $r\in R$ and $m\in N$.
Let $M\subset N$ be the submodule of the nilpotent elements with respect to $\phi$, i.e. $M=\{m\in N|\phi^s(m)=0 {\rm\ for\ some\ } s\}$.
Then there is $s$ such that $\phi^s(m)=0$ for all $m\in M$.
\end{lem}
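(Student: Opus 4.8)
\emph{Sketch of a proof.}
First dispose of the formal points. $M$ is an $R$-submodule of $N$: it is closed under addition because $\phi$ is additive, and it is closed under multiplication by $R$ because $\phi^{s}(rm)=r^{p^{ts}}\phi^{s}(m)$; moreover $M=\bigcup_{s\ge 1}K_{s}$, the increasing union of the $R$-submodules $K_{s}:=\ker\phi^{s}$. Hence the Lemma is equivalent to saying that the ascending chain $K_{1}\subseteq K_{2}\subseteq\cdots$ stabilizes. One should notice that this does \emph{not} follow from $N$ being Artinian alone --- an Artinian module may carry non-stabilizing ascending chains of submodules --- so the argument must use the hypotheses on $R$ in an essential way. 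Next, reduce to $R$ complete: $N$ is $\mathfrak m$-power torsion, so $\phi(rn)=r^{p^{t}}\phi(n)$ extends by continuity from $r\in R$ to $r\in\widehat R$; an Artinian $R$-module is the same as an Artinian $\widehat R$-module, the submodules $K_{s}$ are unaffected, and $\widehat R$ is again a (complete) regular local ring.

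Now pass to Matlis duals. With $E=E_{R}(R/\mathfrak m)$ and $(-)^{\vee}=\Hom_{R}(-,E)$, the module $D:=N^{\vee}$ is \emph{finitely generated} over $R$ and $N=D^{\vee}$. Because $R$ is regular, $E\cong\HH^{\dim R}_{\mathfrak m}(R)$ carries its canonical $p$-linear Frobenius $\Theta$, and --- this is where regularity is used --- $\Theta$ is \emph{injective}. Using $\Theta^{t}$ and the perfect pairing $N\times D\to E$, the $p^{t}$-linear endomorphism $\phi$ of $N$ has a Matlis-dual ``Cartier operator'' $\kappa\colon F^{t}_{*}D\to D$ (equivalently, an additive map $D\to D$ with $\kappa(r^{p^{t}}d)=r\kappa(d)$; making this construction precise, especially when $R$ is not $F$-finite, is itself part of the content). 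Writing $\kappa^{(s)}\colon F^{ts}_{*}D\to D$ for the $s$-fold composite and $\kappa^{(s)}(D)\subseteq D$ for its image --- an honest $R$-submodule, since $r\kappa(d)=\kappa(r^{p^{t}}d)$ --- one checks $D/\kappa^{(s)}(D)\cong(\ker\phi^{s})^{\vee}$. Thus the Lemma is equivalent to: the descending chain of submodules
\[
D\ \supseteq\ \kappa^{(1)}(D)\ \supseteq\ \kappa^{(2)}(D)\ \supseteq\ \cdots
\]
of the Noetherian module $D$ stabilizes. Note that duality by itself has settled nothing, since descending chains in Noetherian modules need not stabilize; the point of keeping regularity of $R$ in reserve is precisely to control this chain.

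The stabilization of that chain is the heart of the matter and the step I expect to be the main obstacle. It has two halves. First, once two consecutive terms coincide the chain is constant from then on, because $\kappa^{(s+1)}(D)=\kappa\bigl(F^{t}_{*}\,\kappa^{(s)}(D)\bigr)$. Second, the chain does eventually stabilize: this is the Hartshorne--Speiser--Lyubeznik phenomenon, and it is here that injectivity of $\Theta$ (i.e.\ the regularity of $R$) is indispensable --- it forces $\kappa$ to be surjective onto the part of $D$ that survives, so the chain cannot strictly decrease forever. For $t=1$ this is the Hartshorne--Speiser--Lyubeznik lemma, and in the complete regular local case it is contained in \cite[p.~10]{Katzman} (where it is used to reprove a result of \cite{ABL}); that argument, which uses only Frobenius actions on $E$ and no $D$-modules, goes through word for word with $p$ replaced by $q=p^{t}$. (Alternatively, one may replace the coefficient field of the complete ring $R$ by its perfection to reduce to an $F$-finite regular local ring $R'$, where the stabilization is classical, and then descend the conclusion ``$\phi^{s}(M)=0$ for some $s$'' along $R\to R'$, using that this extension is faithfully flat and that $(\ker\phi^{s})\otimes_{R}R'=\ker(\phi^{s}\otimes 1)$.)
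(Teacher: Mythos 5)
Your reduction to the complete case and the Matlis-dual reformulation are standard, and --- like the paper, which simply cites \cite[1.11]{HaSp} and \cite[4.4]{Lyubeznik} for $t=1$ and suggests building an $F^{t}$-module theory along \cite[Remark 5.6a]{Lyubeznik} for $t>1$ --- you ultimately defer the core stabilization to the literature. Your observation that the $t=1$ argument ``goes through word for word with $p$ replaced by $p^{t}$'' is the same sentiment, so the architecture of your argument matches the paper's.

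The genuine problem is your explanation of \emph{why} the dual chain stabilizes. You assert that injectivity of $\Theta$ on $E$ --- i.e.\ regularity of $R$ --- is ``indispensable'' and that it ``forces $\kappa$ to be surjective onto the part of $D$ that survives.'' This is not the mechanism, and regularity is in fact irrelevant at this step: Hartshorne--Speiser 1.11 and Lyubeznik 4.4 hold for Artinian modules over an \emph{arbitrary} complete Noetherian local ring. The actual input is the finite generation of the dual $D$ together with the structure theory of roots of $F$-modules (Lyubeznik's 4.2), not injectivity of the Frobenius action on $E$; your surjectivity statement has no precise meaning as written and does not produce the stabilization. Presenting regularity as the reason the lemma holds would mislead a reader into thinking the conclusion can fail for non-regular local rings, which it cannot. (Your parenthetical alternative via perfecting the coefficient field also needs care --- the $p^{t}$-semilinear structure does not base-change along the naive $\phi\otimes 1$, and one must verify that Artinianness and the behaviour of the injective hull persist along $R\to k^{\mathrm{perf}}[[x_{1},\dots,x_{n}]]$ when $k$ is not $F$-finite --- but that is a secondary issue.)
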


\begin{proof} Clearly, $\phi$ acts on $M$ in a natural way, i.e. $\phi(m)\in M$ for all $m\in M$. Hence for $t=1$ the result is well-known \cite[1.11]{HaSp}, \cite[4.4]{Lyubeznik}. For $t>1$ a proof may be obtained, for example, along the lines of \cite[Remark 5.6a]{Lyubeznik}, i.e. constructing a theory of $F^t$-modules and applying it to prove the lemma via an analog of \cite[4.2]{Lyubeznik} for $F^t$-modules in the same way as \cite[4.4]{Lyubeznik} was deduced from \cite[4.2]{Lyubeznik} in \cite{Lyubeznik}.
\end{proof}

The Frobenius map $R\stackrel{r\mapsto r^p}{\longrightarrow}R$ induces a natural action $f:H^{{\rm dim}R}_{\mathfrak m}(R)\to H^{{\rm dim}R}_{\mathfrak m}(R)$ on the top local cohomology module of $R$ with support in the maximal ideal. It is a map of abelian groups such that $f(rh)=r^pf(h)$ for every $r\in R$ and every $h\in H^{{\rm dim}R}_{\mathfrak m}(R)$.

\begin{lem}
Let $R$ be an excellent regular local ring, let $H=H^{{\rm dim}R}_{\mathfrak m}(R)$ and let $f:H\to H$ be the natural action as above. Let $L\subset H$ be an $R$-submodule and let $I\subset R$ be the annihilator ideal of $L$. We denote $L^{[p^e]}$ the $R$-submodule of $H$ generated by the elements $f^e(x)$ as $x$ runs through all the elements of $L$. The annihilator ideal of $L^{[p^e]}$ in $R$ is $I^{[p^e]}$.
\end{lem}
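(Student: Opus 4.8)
The plan is to reduce the statement to the analogous fact about Frobenius powers of ideals in $R$ via Matlis duality, using the fact that $H = H^{\dim R}_{\mathfrak m}(R)$ is the injective hull of the residue field. Write $d = \dim R$ and recall that for a complete regular local ring, Matlis duality sets up an inclusion-reversing bijection between submodules of $H$ and ideals of $R$ (or at least ideals of $\hat R$); the annihilator operation $L \mapsto \Ann_R(L)$ realizes one direction of this correspondence. Since $R$ is excellent regular local, $\hat R$ is regular local as well, and faithfully flat descent (as used in the proof of Theorem \ref{exc}) will let us pass between $R$ and $\hat R$; so I would first reduce to the case where $R$ is complete. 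In the complete case, $\Ann_R$ induces a bijection between $R$-submodules of $H$ and ideals of $R$, with $\Ann_R(\text{the submodule killed by }J) = J$ for every ideal $J$.

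The key step is then the following compatibility: if $L \subset H$ is the submodule annihilated by the ideal $I$, then $L^{[p^e]}$ — the submodule generated by $\{f^e(x) : x \in L\}$ — is annihilated by $I^{[p^e]}$, and indeed $\Ann_R(L^{[p^e]}) = I^{[p^e]}$. One containment is easy: if $r \in I$, so $rx = 0$ for all $x \in L$, then $r^{p^e} f^e(x) = f^e(rx) = f^e(0) = 0$ using the defining property $f(ry) = r^p f(y)$ iterated $e$ times; hence $I^{[p^e]}$ kills $L^{[p^e]}$, i.e. $I^{[p^e]} \subseteq \Ann_R(L^{[p^e]})$. For the reverse containment I would argue via Matlis duality directly: under the identification of $H$ with the injective hull $E$ of $R/\mathfrak m$, the submodule $L$ is Matlis-dual to $R/I$, and the natural Frobenius $f$ on $H$ is dual to the Frobenius on $R$; concretely, $L^{[p^e]}$ is the image of $R \otimes_{R,F^e} L$ (where $F^e$ is the $e$-fold Frobenius) inside $H$, and its annihilator is computed by the same base change, giving $\Ann_R(L^{[p^e]}) = \Ann_R(L)^{[p^e]} = I^{[p^e]}$. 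Since $R$ is regular, $F^e\colon R \to R$ is flat, so $L^{[p^e]} = R \otimes_{R,F^e} L$ (no kernel is lost), which is exactly the point where regularity of $R$ is essential and where the annihilator is forced to be $I^{[p^e]}$ rather than merely its radical.

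The main obstacle I anticipate is making the second containment fully rigorous without circularity: one must be careful that "$L^{[p^e]}$ is generated by the $f^e(x)$" genuinely coincides with the base-changed module $F^e_* R \otimes_R L$ (equivalently $R \otimes_{R, F^e} L$), and that Matlis duality converts this base change of submodules of $E$ into the Frobenius power $I^{[p^e]}$ of the annihilator ideal. I would handle this by noting that $E$ carries a canonical $e$-th Frobenius action whose associated map $F^e_* E \to E$ is an isomorphism precisely because $R$ is regular (Kunz), so that the natural map $R \otimes_{R,F^e}(\text{sub of }E) \hookrightarrow R\otimes_{R,F^e} E \cong E$ has image $L^{[p^e]}$ and remains injective; dualizing the right exact sequence $I \otimes_{R,F^e} R \to R \to R/I \to 0$ to $0 \to L^{[p^e]} \to E \to (R/I^{[p^e]})^\vee \to 0$ then pins down $\Ann_R(L^{[p^e]}) = I^{[p^e]}$. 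The passage from complete $R$ to excellent $R$ at the end is routine given \cite[6.6]{LySm}: annihilators and Frobenius powers both commute with the faithfully flat map $R \to \hat R$, and $L^{[p^e]}\hat R = (L\hat R)^{[p^e]}$, so the complete case yields the general case.
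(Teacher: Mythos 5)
Your approach is essentially the one used in the paper: reduce to the complete case via \cite[6.6]{LySm}, identify $L^{[p^e]}$ with the image of $R^{(e)}\otimes_R L$ under the isomorphism $R^{(e)}\otimes_R E\cong E$ coming from flatness of the Frobenius (Kunz), and finish by Matlis duality. The easy containment $I^{[p^e]}\subseteq\Ann_R(L^{[p^e]})$ and the reduction to $\hat R$ are both correct and match the paper.

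The problem is that the reverse containment is asserted rather than proven. The sentence ``its annihilator is computed by the same base change, giving $\Ann_R(L^{[p^e]})=\Ann_R(L)^{[p^e]}$'' is exactly the content of the lemma; flatness of $F^e$ alone does not automatically give $\Ann(R^{(e)}\otimes_R M)=\Ann(M)^{[p^e]}$ for an \emph{Artinian} module $M$. For finitely generated $M$ this does follow from flatness (reduce to cyclic summands and use that finite intersections of ideals commute with Frobenius powers), but $L$ is typically not finitely generated, and infinite intersections of ideals need not commute with Frobenius powers in general. Your ``dualizing the right exact sequence'' step has the same circularity: applying $F^e$ to $0\to I\to R\to R/I\to 0$ and then Matlis duality produces $0\to\Ann_E(I^{[p^e]})\to E\to D(I^{[p^e]})\to 0$ (note also that $D(R/I^{[p^e]})$ belongs on the kernel side, not the cokernel side as in your display), and identifying $\Ann_E(I^{[p^e]})$ with $L^{[p^e]}$ is precisely what has to be shown. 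The paper closes this gap by citing \cite[4.1]{Lyubeznik}: for an Artinian module $N$ over a complete local ring, Matlis duality commutes with the Frobenius functor, $D(R^{(e)}\otimes_R N)\cong R^{(e)}\otimes_R D(N)$. With this in hand, $D(L^{[p^e]})\cong R^{(e)}\otimes_R D(L)\cong R^{(e)}\otimes_R(R/I)\cong R/I^{[p^e]}$, and since an Artinian module and its Matlis dual have the same annihilator one gets $\Ann(L^{[p^e]})=I^{[p^e]}$. You should either invoke this result explicitly or supply an argument for it; as written, the key step of your proof presupposes its conclusion.
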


\begin{proof}
Since $H$ is Artinian, $H, L$ and $L^{[p^e]}$ are all $\hat R$-modules in a natural way, where  $\hat R$ is the completion of $R$ with respect to the maximal ideal. Assume the lemma holds for $\hat R$. Let $\tilde I$ be the annihilator of $L$ in $\hat R$. Then the annihilator of  $L^{[p^e]}$ in $\hat R$ (resp. in $R$) is $\tilde I^{[p^e]}$ (resp. $\tilde I^{[p^e]}\cap R$). Since $I=\tilde I\cap R$, it follows from \cite[6.6]{LySm} that $\tilde I^{[p^e]}\cap R=I^{[p^e]}$. Thus it is enough to prove the lemma assuming that $R$ is complete, and we so assume. 

Let $R^{(e)}$ be the $R$-bimodule whose underlying abelian group is $R$, the left $R$-module structure is the usual one and the right $R$-module structure is given by $r'\cdot r=r^{p^e}r'$. Since $R$ is regular, $R^{(e)}$ is a flat right $R$-module, hence $R^{(e)}\otimes_RL$ is a submodule of  $R^{(e)}\otimes_RH$. The $R$-linear map $\phi:R^{(e)}\otimes_RH\to H$ that sends $r'\otimes h$ to $r'f^e(h)$ is well-known to be an isomorphism and $L^{[p^e]}=\phi(R^{(e)}\otimes_RL)$. Hence the annihilator of $L^{[p^e]}$ is the same as the annihilator of $R^{(e)}\otimes_RL$.

For an Artinian module $N$ the annihilator of $N$ in $R$ is the same as the annihilator of $D(N)$, where $D(-)={\rm Hom}_R(-,E)$ is the Matlis duality functor. Hence the annihilators of $L$ and $R^{(e)}\otimes_RL$ are the same as, respectively, the annihilators of $D(L)$ and $D(R^{(e)}\otimes_RL)$. Since $R$ is regular, $H\cong E$ and, since $R$ is complete, $D(L)\cong R/I$. Since $L$ is Artinian, \cite[4.1]{Lyubeznik} implies that $D(R^{(e)}\otimes_RL)\cong R^{(e)}\otimes_RD(L)$, i.e. $D(R^{(e)}\otimes_RL)\cong R^{(e)}\otimes_RR/I\cong R/I^{[p^e]}$ (note that the functor $R^{(e)}\otimes_R(-))$ is just the Frobenius functor denoted $F(-)$ in \cite{Lyubeznik}).
\end{proof}

\section{Main results}
\label{MR}

The main result of this section and the whole paper is Theorem \ref{main}. In this section $R$ is excellent, regular and local, and hence satisfies condition (*) by Theorem \ref{exc}. 

Since $R$ is regular, $E$ is (non-canonically) isomorphic to $H^{{\rm dim}R}_{\mathfrak m} (R)$. The canonical Frobenius action $f:H^{{\rm dim}R}_{\mathfrak m} (R)\to H^{{\rm dim}R}_{\mathfrak m} (R)$ gives rise to a non-canonical Frobenius action $f:E\to E$ via an isomorphism $E\cong H^{{\rm dim}R}_{\mathfrak m} (R)$. We fix one such Frobenius action $f$ on $E$ once and for all. 

Given a fixed $g\in R$ we can define for all $a\in  \mathbb{N}$ and $\beta\in  \mathbb{N}$
an $R[\Theta_{a,\beta}; f^\beta]$-left-module structure
(cf.~\cite[Section 2]{Katzman} for notation and properties of these skew-polynomial rings) on $E$
given by $\Theta_{a,\beta} m=g^a f^\beta (m)$ for every $m\in E$.
For all integers $e$ and $q$ we define
$$\psi_e(q)=1+q+\dots+q^{e-1}=\frac{q^e-1}{q-1} .$$

\begin{thm}\label{Theorem: Nilpotents}[cf.~Theorem 4.6 in \cite{Katzman}]
With $\Theta$ as above, set $N_s=\{ m\in E \,|\, \Theta^s m=0 \}$.
We have
$$N_s=\Ann_E I_{s \beta}(g^{ a\psi_s(p^\beta) }) .$$
\end{thm}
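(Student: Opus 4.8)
The plan is to compute $\Theta^s m$ explicitly and then translate the condition $\Theta^s m = 0$ into an annihilator statement about an ideal, using the Frobenius-action lemma proved in Section 5. First I would unwind the definition of $\Theta = \Theta_{a,\beta}$: since $\Theta m = g^a f^\beta(m)$ and $f^\beta$ is an action of the $\beta$-fold Frobenius (so $f^\beta(rm) = r^{p^\beta} f^\beta(m)$), an easy induction gives
$$\Theta^s m = g^{a(1 + p^\beta + p^{2\beta} + \dots + p^{(s-1)\beta})} f^{s\beta}(m) = g^{a\psi_s(p^\beta)} f^{s\beta}(m),$$
where I have used $p^\beta$ in place of $q$ in the definition of $\psi$. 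Thus $N_s = \{ m \in E \mid g^{a\psi_s(p^\beta)} f^{s\beta}(m) = 0 \}$, and the theorem reduces to identifying this set with $\Ann_E I_{s\beta}(g^{a\psi_s(p^\beta)})$.

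Next I would set $q = p^{s\beta}$, $r = a\psi_s(p^\beta)$, and consider the submodule $L = \Ann_E(\mathfrak{b})$ for an ideal $\mathfrak{b}$ of $R$, or dually work with the submodule generated by $f^{s\beta}$-images. The key tool is the lemma in Section 5: for a submodule $L \subset H^{\dim R}_{\mathfrak m}(R) \cong E$ with annihilator $I$, the submodule $L^{[q]}$ generated by the $f^{s\beta}$-images has annihilator $I^{[q]}$. I would apply this with $L = \Ann_E(g^r)$, which has annihilator $(g^r)$ (the principal ideal), since $E$ is the injective hull of the residue field and $R$ is a domain; more carefully, the annihilator of $\Ann_E(\mathfrak{c})$ is $\mathfrak{c}$ for any ideal $\mathfrak{c}$ because $R/\mathfrak{c} \hookrightarrow \Hom_R(R/\mathfrak{c}, E)$ canonically. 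Then for $m \in E$ one has $f^{s\beta}(m)$ killed by $g^r$, i.e. $g^r f^{s\beta}(m) = 0$, precisely when $f^{s\beta}(m) \in \Ann_E(g^r)$. I would then argue that this is equivalent to $m$ lying in the annihilator of $I_{s\beta}(g^r)$: the point is that $I_{s\beta}(g^r)$ is by definition the smallest ideal $I$ with $I^{[q]} \supset (g^r)$ (using condition (*), valid here by Theorem \ref{exc}), and by Matlis duality the submodule $\{ m : f^{s\beta}(m) \in \Ann_E(g^r) \}$ is exactly $\Ann_E$ of the smallest ideal $I$ such that the $R$-span of $f^{s\beta}(\Ann_E(I))$ lands in $\Ann_E(g^r)$, which by the Section 5 lemma is the smallest $I$ with $I^{[q]} \subset (g^r)^{??}$ — here I need to be careful about the direction of the containment and whether it is $I^{[q]} \subset (g^r)$ or $(g^r) \subset I^{[q]}$.

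Resolving that containment direction is the main obstacle, and it is where I would spend the most care. The clean way is to pass through Matlis duality once and for all: applying $D(-) = \Hom_R(-, E)$ to the inclusion $N_s \subset E$, and using that $D$ is exact and that $D(E) \cong \hat R \cong R$ (as $R$ may be assumed complete, by the reduction in the Section 5 lemma and \cite[6.6]{LySm}), the submodule $N_s$ corresponds to a quotient $R \twoheadrightarrow R/J$ for some ideal $J$, and I must show $J = I_{s\beta}(g^r)$. Chasing $D$ through the formula $\Theta^s m = g^r f^{s\beta}(m)$: the operator "multiply $D(E)$-image" corresponds to the composite $R^{(s\beta)} \otimes_R R \xrightarrow{g^r} R^{(s\beta)} \otimes_R R \cong R$, whose image is the ideal $(g^r)^{[1/p^{s\beta}]}$ in the notation of \cite{Blickle-Mustata-Smith-1}, i.e. $I_{s\beta}(g^r)$, so $J = I_{s\beta}(g^r)$ as desired. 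Once the dualization bookkeeping is pinned down this is a direct computation; I expect the only real subtlety to be keeping the left/right $R^{(e)}$-module structures and the direction of the duality straight, exactly the kind of issue already handled in the second lemma of Section 5, which I would cite and imitate. The final statement $N_s = \Ann_E I_{s\beta}(g^{a\psi_s(p^\beta)})$ then follows by re-substituting $r = a\psi_s(p^\beta)$.
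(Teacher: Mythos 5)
Your opening computation $\Theta^s m = g^{a\psi_s(p^\beta)}f^{s\beta}(m)$ is correct and is exactly how the paper begins, and you correctly identify the Section 5 lemma (on the annihilator of $L^{[p^e]}$) as the key tool. But there is a genuine gap: you explicitly admit you cannot pin down whether the relevant condition is $I^{[q]}\subset(g^r)$ or $(g^r)\subset I^{[q]}$, and your attempted resolution by dualizing the map $m\mapsto g^r f^{s\beta}(m)$ does not actually settle it. That map is not $R$-linear, so you cannot apply $D(-)$ to it directly; and the assertion that the image of the composite $R^{(s\beta)}\otimes_R R \xrightarrow{g^r} R^{(s\beta)}\otimes_R R \cong R$ is $(g^r)^{[1/p^{s\beta}]}$ is simply wrong as stated (left multiplication by $g^r$ has image the principal ideal $(g^r)$, not $I_{s\beta}(g^r)$). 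So the step that is supposed to identify $\Ann(N_s)$ with $I_{s\beta}(g^r)$ never closes.

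The clean way to close it, which is what the paper does, avoids the cokernel chase entirely. For an arbitrary $R$-submodule $L\subset E$ with $I=\Ann(L)$, one has $L\subset N_s$ iff $g^{r}f^{s\beta}(\ell)=0$ for all $\ell\in L$, iff $g^r$ annihilates the $R$-span of $f^{s\beta}(L)$, i.e. $g^r$ annihilates $L^{[p^{s\beta}]}$, iff $g^r\in\Ann\bigl(L^{[p^{s\beta}]}\bigr)=I^{[p^{s\beta}]}$ by the Section 5 lemma. So the condition is $(g^r)\subset I^{[p^{s\beta}]}$, i.e. $I^{[p^{s\beta}]}\supset(g^r)$ — exactly the inclusion in the definition of $I_{s\beta}(g^r)$ — and by property (*) the minimal such $I$ is $I_{s\beta}(g^r)$. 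Applying this with $L=Rm$ for each $m\in E$ gives $m\in N_s$ iff $\Ann(m)\supset I_{s\beta}(g^r)$ iff $m\in\Ann_E\,I_{s\beta}(g^r)$, which is the statement. Note in particular that the lemma is applied to an arbitrary $L$ and its bracket power, not to $L=\Ann_E(g^r)$ as in your sketch; that choice of $L$ is what sends you down the wrong path.
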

\begin{proof}
Using the relation $f^{\beta}(g^a(-))=g^{ap^{\beta}}f^{\beta}(-)$ one proves by induction on $s$ that $\Theta^s m=g^{ a\psi_s(p^\beta) }f^{s\beta}(m)$ for every $m\in E$. This implies that an $R$-submodule $L$ of $E$ is a submodule of $N_s$ if and only if $g^{ a\psi_s(p^\beta) }$ annihilates $L^{[p^{s\beta}]}$, i.e. if and only if $g^{ a\psi_s(p^\beta) }$ belongs to the annihilator ideal of $L^{[p^{s\beta}]}$. If $I\subset R$ is the annihilator ideal of $L$, the preceding lemma implies that the annihilator ideal of $L^{[p^{s\beta}]}$ is $I^{[p^{s\beta}]}$. But the smallest ideal $I$ of $R$ such that $g^{ a\psi_s(p^\beta) }\in I^{[p^{s\beta}]}$ is $I_{s \beta}(g^{ a\psi_s(p^\beta) })$.
\end{proof}

Since $N_1\subset N_2\subset N_3\subset\dots$ form an ascending chain, their annihilator ideals form
a descending chain, i.e. $I_{\beta}(g^{ a\psi(p^\beta) })\supset I_{2 \beta}(g^{ a\psi_2(p^\beta) })\supset\dots$.

\begin{cor}\label{chain}
The descending chain of ideals $I_{\beta}(g^{ a\psi(p^\beta) })\supset I_{2 \beta}(g^{ a\psi_2(p^\beta) })\supset\dots$ stabilizes.
\end{cor}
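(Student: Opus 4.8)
The plan is to reduce the stabilization of the descending chain of ideals to the stabilization of the ascending chain of submodules $N_1 \subset N_2 \subset N_3 \subset \cdots$ of $E$, and then to obtain the latter from the Frobenius-nilpotency result of Lemma \ref{t-fold}. By Theorem \ref{Theorem: Nilpotents} we have $N_s = \Ann_E I_{s\beta}(g^{a\psi_s(p^\beta)})$ for every $s$, so the ideals in the chain are exactly the annihilators in $R$ of the modules $N_s$. Since $E$ is the injective hull of the residue field, Matlis duality over the completion $\hat R$ sets up an order-reversing bijection between $R$-submodules of $E$ and ideals of $\hat R$ (and annihilators in $R$ and in $\hat R$ of an Artinian module agree); hence the descending chain of ideals stabilizes if and only if the ascending chain $N_1 \subset N_2 \subset \cdots$ stabilizes. (Alternatively one can argue directly: $I_{s\beta}(g^{a\psi_s(p^\beta)}) = \Ann_R N_s$, and if $N_s = N_{s+1}$ then the annihilators agree, so it suffices to show the $N_s$ stabilize.)

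The main step is therefore to show that $\bigcup_s N_s$ equals $N_{s_0}$ for some fixed $s_0$. First I would observe that $\bigcup_s N_s$ is precisely the module $M$ of elements of $E$ that are nilpotent under the action $\Theta = \Theta_{a,\beta}$: indeed the identity $\Theta^s m = g^{a\psi_s(p^\beta)} f^{s\beta}(m)$ (proved by induction in Theorem \ref{Theorem: Nilpotents}) exhibits $\Theta$ as built from the $\beta$-fold Frobenius action $f^\beta$ twisted by multiplication by $g^a$, so $\Theta$ is itself an action of the $\beta$-fold Frobenius on the Artinian module $E$ in the sense of Lemma \ref{t-fold}: it is additive and satisfies $\Theta(rm) = r^{p^\beta}\Theta(m)$ for all $r \in R$, $m \in E$. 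Lemma \ref{t-fold}, applied with $N = E$, $\phi = \Theta$, $t = \beta$, then yields an integer $s_0$ with $\Theta^{s_0}(m) = 0$ for every $m$ in the nilpotent submodule $M = \bigcup_s N_s$. Hence $M \subseteq N_{s_0}$, and since trivially $N_{s_0} \subseteq M$, we get $N_{s_0} = M = N_s$ for all $s \geq s_0$. Dualizing back, the chain $I_{\beta}(g^{a\psi(p^\beta)}) \supset I_{2\beta}(g^{a\psi_2(p^\beta)}) \supset \cdots$ is constant from index $s_0$ on.

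The one point that needs a little care — and which I expect to be the only real obstacle — is checking that $\Theta$ genuinely satisfies the hypotheses of Lemma \ref{t-fold}, namely that the nilpotent submodule $M$ is stable under $\Theta$ (immediate, since $\Theta^s m = 0 \Rightarrow \Theta^{s}(\Theta m) = \Theta(\Theta^s m) = 0$) and that $E$ is Artinian as an $R$-module (standard: $E$ is the injective hull of $R/\mathfrak m$, hence Artinian over the excellent, in particular Noetherian, local ring $R$). Everything else is a formal consequence of Theorem \ref{Theorem: Nilpotents}, Lemma \ref{t-fold}, and Matlis duality, with no new computation required.
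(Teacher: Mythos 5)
Your proof is correct and takes essentially the same route as the paper's, which simply invokes Lemma \ref{t-fold} to stabilize the ascending chain $N_1 \subset N_2 \subset \cdots$ and then passes to annihilators. You have written out the two implicit steps the paper leaves to the reader — that $\Theta_{a,\beta}$ is a genuine action of the $\beta$-fold Frobenius on the Artinian module $E$ with $\bigcup_s N_s$ as its nilpotent submodule, and that Matlis duality (together with faithful flatness of $\hat R$ over $R$) recovers $I_{s\beta}(g^{a\psi_s(p^\beta)})$ from $N_s$ — so the argument is a correctly filled-in version of the same proof.
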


\begin{proof} By Lemma \ref{t-fold}, the ascending chain of submodules $N_1\subset N_2\subset\dots$ stabilizes, hence the descending chain of their annihilators stabilizes as well.
\end{proof}

In particular, for $\beta=1$ and $a=p-1$ we recover \cite[4.2]{ABL} for a complete regular
local ring without assuming that $R$ is $F$-finite and without $D$-modules in the proof (cf. \cite[p. 10]{Katzman}).

To simplify notation for generalized test ideals of
principal ideals we write $\tau(g^c)$ instead of $\tau\big( (gR)^c \big)$.

\begin{prop}\label{Theorem: no special accumulation points}
Let  $a,\beta>0$ be integers and write $\gamma=a/(p^\beta-1)$.
There exists a $c\in(0,\gamma)$ for which $\tau(g^d)=\tau(g^c)$ for all $d\in(c,\gamma)$.
\end{prop}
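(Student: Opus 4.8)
The plan is to relate the generalized test ideals $\tau(g^d)$ for $d$ near $\gamma$ to the terms of the stabilizing chain in Corollary \ref{chain}, and then exploit stabilization. Recall $\gamma = a/(p^\beta-1)$, so that $a\psi_s(p^\beta) = a\frac{p^{s\beta}-1}{p^\beta-1} = \gamma(p^{s\beta}-1) = \gamma p^{s\beta} - \gamma$. Thus $I_{s\beta}(g^{a\psi_s(p^\beta)}) = I_{s\beta}(g^{\lceil \gamma p^{s\beta}\rceil - 1})$ once $\gamma p^{s\beta}$ is an integer (which it need not be, but $a\psi_s(p^\beta)$ is always an integer, and it equals $\gamma p^{s\beta} - \gamma$, a number slightly below $\gamma p^{s\beta}$). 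The first step is therefore to compare the exponent $a\psi_s(p^\beta)$ with $\lceil d\,p^{s\beta}\rceil$ for $d$ slightly less than $\gamma$: for such $d$ and $s$ large, one has $d\,p^{s\beta} \le a\psi_s(p^\beta) < \gamma p^{s\beta}$, so $\lceil d\,p^{s\beta}\rceil \le a\psi_s(p^\beta)$, giving $I_{s\beta}(g^{a\psi_s(p^\beta)}) \subset I_{s\beta}(g^{\lceil d p^{s\beta}\rceil})$. Conversely, since $a\psi_s(p^\beta) < \gamma p^{s\beta}$, we get $I_{s\beta}(g^{a\psi_s(p^\beta)}) \supset I_{s\beta}(g^{\lceil \gamma' p^{s\beta}\rceil})$ for any $\gamma' \ge \gamma$ with $\gamma' p^{s\beta}$ at least $a\psi_s(p^\beta)$; combined with monotonicity of the defining chain for $\tau(g^{\gamma})$, these sandwich $I_{s\beta}(g^{a\psi_s(p^\beta)})$ between $\tau(g^d)$-approximants and $\tau(g^\gamma)$.

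The second step is to use Corollary \ref{chain}: the chain $I_{s\beta}(g^{a\psi_s(p^\beta)})$ stabilizes, say to an ideal $J$, for all $s \ge s_0$. I then want to identify $J$ with $\tau(g^c)$ for an appropriate $c < \gamma$, or at least to show that $\tau(g^d)$ is constant on an interval $(c,\gamma)$ by pinning it between $J$ and itself. The key point: for $d$ close enough to $\gamma$ from below, Proposition \ref{2.14} gives $\tau(g^d) = I_e(g^r)$ whenever $d < r/p^e < d + \epsilon$; choosing $e = s\beta$ with $s$ large and $r = a\psi_s(p^\beta)$, one checks $r/p^{s\beta} = \gamma(1 - p^{-s\beta})$, which lies in $(d, d+\epsilon)$ once $d$ is within $\epsilon$ of $\gamma$ and $s$ is large. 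Hence $\tau(g^d) = I_{s\beta}(g^{a\psi_s(p^\beta)}) = J$ for all such $d$, and this holds for every $d$ in some interval $(c,\gamma)$ where $c$ depends only on the $\epsilon$ from Proposition \ref{2.14} applied at the point $\gamma$ and on $s_0$ from Corollary \ref{chain}. Taking $c = \max(\gamma - \epsilon, \,\text{something})$ appropriately finishes the argument.

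I expect the main obstacle to be the bookkeeping in the first step: making sure that a single choice of $\epsilon$ (from Proposition \ref{2.14} at $c=\gamma$) together with a single $s_0$ (from Corollary \ref{chain}) yields a uniform interval $(c,\gamma)$ on which $\tau(g^d)$ is simultaneously equal to $I_{s\beta}(g^{a\psi_s(p^\beta)})$ for some admissible $s$ — one must quantify correctly, since as $d \to \gamma$ the required $s$ grows, but stabilization of the chain is exactly what saves us: beyond $s_0$ all terms agree, so any sufficiently large $s$ works. A secondary subtlety is handling the ceiling function versus the exact integer $a\psi_s(p^\beta)$, i.e. verifying $\lceil d\,p^{s\beta}\rceil \le a\psi_s(p^\beta)$ and that $r/p^{s\beta} > d$ strictly; these are elementary inequalities once one writes $a\psi_s(p^\beta) = \gamma p^{s\beta} - \gamma$ and uses $d < \gamma$. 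Once these are in place, the statement follows by setting $c$ to be the left endpoint of the resulting interval.
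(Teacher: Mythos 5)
Your proposal correctly identifies the two key ingredients — Corollary~\ref{chain}, and the fact that the sequence $c_s := a\psi_s(p^\beta)/p^{s\beta}=\gamma(1-p^{-s\beta})$ increases to $\gamma$ — but there is a genuine gap in how you link $I_{s\beta}(g^{a\psi_s(p^\beta)})$ to $\tau(g^d)$. The paper's proof rests on an exact identity for principal ideals that you never invoke: for \emph{every} $e\ge 1$ and integer $r\ge 0$, $\tau(g^{r/p^e})=I_e(g^r)$. This holds because $\lceil (r/p^e)p^{e'}\rceil=rp^{e'-e}$ for $e'\ge e$, and then $I_{e'}\bigl(g^{rp^{e'-e}}\bigr)=I_e(g^r)$ by faithful flatness of Frobenius (the Frobenius power $(-)^{[p]}$ preserves and reflects containments of ideals in a regular ring, so the defining chain for $\tau(g^{r/p^e})$ is \emph{constant} from index $e$ on, not merely eventually constant). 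With this in hand the argument is short: $\tau(g^{c_s})=I_{s\beta}(g^{a\psi_s(p^\beta)})$ for all $s$, Corollary~\ref{chain} forces $\tau(g^{c_s})$ to be constant, say equal to $J$, for $s\ge\nu$, and then for any $d\in(c_\nu,\gamma)$ one picks $s,s'\ge\nu$ with $c_s<d<c_{s'}$ and sandwiches $J=\tau(g^{c_{s'}})\subseteq\tau(g^d)\subseteq\tau(g^{c_s})=J$ using monotonicity of $\tau$.

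Your substitute for this identity — Proposition~\ref{2.14} applied at each $d$ — does not close the argument. Proposition~\ref{2.14} produces an $\epsilon_d>0$ depending on $d$, and you need some $c_s$ to land in $(d,d+\epsilon_d)$. But the $c_s$ increase to $\gamma$, so for large $s$ they exceed $d+\epsilon_d$ whenever $\epsilon_d<\gamma-d$, and for small $s$ they fall below $d$; nothing in Proposition~\ref{2.14} rules out the sequence skipping over the window $(d,d+\epsilon_d)$ entirely, since the spacing $c_{s+1}-c_s$ near the crossing point is not a priori smaller than $\epsilon_d$. (Applying Proposition~\ref{2.14} at $c=\gamma$ instead, as you also suggest, controls only $r/p^e>\gamma$, which is on the wrong side since $c_s<\gamma$.) Similarly, your first-paragraph sandwich only establishes $J\subseteq\tau(g^d)$ for $d$ near $\gamma$; the converse inclusion is exactly what is missing. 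Supplying the identity $\tau(g^{r/p^e})=I_e(g^r)$ repairs both halves and makes the rest of your outline go through.
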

\begin{proof}
For all integers $r>0$ and $e\geq 1$, the generalized test ideal $\tau(g^{r/p^e})$ is just $I_e(g^r)$ because the ideal $(g^r)$ is principal.
Now for all integers $\beta,e\geq 1$,
\begin{equation}\label{eqn1}
I_{s\beta}(g^{a(1+p^\beta+\dots + p^{\beta(s-1)})})=
\tau\left( g^\frac{a(1+p^\beta+\dots + p^{\beta(s-1)})}{p^{s\beta}} \right)
\end{equation}
and
$$\frac{ a(1+p^\beta+\dots + p^{\beta(s-1)})} {p^{s\beta}}=
\frac{a}{p^{s\beta}} \frac{p^{s\beta}-1}{p^\beta-1}$$
is an increasing sequence which converges to $\gamma$ as $s\rightarrow \infty$.
By Corollary \ref{chain} the left hand side of (\ref{eqn1}) stabilizes (say for $s\geq \nu$). Hence so must the right hand side, i.e. we may take
$c=\frac{a(1+p^\beta+\dots + p^{\beta(\nu-1)})}{p^{\nu\beta}}$.
\end{proof}

\begin{cor}
The set of the $F$-jumping coefficients of $g$ cannot have an accumulation point of the form
$\frac{a}{p^{d}(p^{\beta}-1)}$ with $a, d, \beta\in \mathbb{N}$.
\end{cor}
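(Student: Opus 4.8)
The plan is to reduce the claim to the case of an accumulation point of the shape $\gamma=\frac{a}{p^{\beta}-1}$ (i.e.\ $d=0$) by multiplying by powers of $p$, and then to derive a contradiction from Proposition~\ref{Theorem: no special accumulation points}. Write $J$ for the set of $F$-jumping coefficients of $g$; since $gR$ is generated by one element, every result of the preceding sections applies to it with $m=1$. We may assume $a\ge 1$ and $\beta\ge 1$ (if $a=0$ the point in question is $0$, which is not an accumulation point of $J$ because $J$ is closed by Lemma~\ref{closed} and consists of positive reals). Suppose, towards a contradiction, that $\lambda=\frac{a}{p^{d}(p^{\beta}-1)}$ is an accumulation point of $J$; fix a sequence $\lambda_{n}\in J\setminus\{\lambda\}$ with $\lambda_{n}\to\lambda$.

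First I would promote $\lambda$ to exponent $d=0$. By Proposition~\ref{Proposition: pc is a jumping coefficient}, $c\in J$ implies $pc\in J$, so iterating $d$ times gives $p^{d}\lambda_{n}\in J$ for all $n$. As multiplication by $p^{d}$ is injective we have $p^{d}\lambda_{n}\ne p^{d}\lambda$, and $p^{d}\lambda_{n}\to p^{d}\lambda=\gamma$ where $\gamma=\frac{a}{p^{\beta}-1}$. Hence $\gamma$ is again an accumulation point of $J$.

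Next I would use the argument already contained in the proof of Lemma~\ref{closed}: since $\gamma$ is an accumulation point of $J$, Corollary~\ref{2.16} (which excludes approach from the right) forces the existence of a strictly increasing sequence $\mu_{1}<\mu_{2}<\cdots$ of $F$-jumping coefficients with $\mu_{n}\to\gamma$, so in particular $\mu_{n}<\gamma$ for every $n$. On the other hand, Proposition~\ref{Theorem: no special accumulation points} applied to this pair $a,\beta$ yields a $c\in(0,\gamma)$ with $\tau(g^{x})=\tau(g^{c})$ for all $x\in(c,\gamma)$; as $\tau$ is then constant on $(c,\gamma)$, no point of $(c,\gamma)$ is an $F$-jumping coefficient, i.e.\ $(c,\gamma)\cap J=\emptyset$. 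But $\mu_{n}\to\gamma$ with $\mu_{n}<\gamma$ places $\mu_{n}$ in $(c,\gamma)$ for all large $n$, a contradiction. Therefore $\gamma$, and with it $\lambda$, cannot be an accumulation point of $J$.

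I do not anticipate a genuine obstacle. The two points that need care are the reduction to $d=0$ — which is nothing more than applying Proposition~\ref{Proposition: pc is a jumping coefficient} term-by-term to a convergent sequence — and the observation that an accumulation point of $J$ is necessarily a limit of an increasing sequence in $J$, which is already recorded in the proof of Lemma~\ref{closed} via Corollary~\ref{2.16}; once these are in hand, Proposition~\ref{Theorem: no special accumulation points} closes the argument immediately.
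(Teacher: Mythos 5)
Your proof is correct and follows the same route as the paper's: multiply by $p^d$ to push the accumulation point to $\gamma=a/(p^\beta-1)$ (using Proposition~\ref{Proposition: pc is a jumping coefficient} termwise on a convergent sequence), then observe via Proposition~\ref{Theorem: no special accumulation points} that no $F$-jumping coefficients lie in $(c,\gamma)$, contradicting the existence of an increasing sequence converging to $\gamma$. The only cosmetic difference is that the paper also cites Proposition~\ref{3.4(2)}, which is not actually needed here since Proposition~\ref{Theorem: no special accumulation points} places no restriction on the size of $\gamma$; your version is cleaner and also handles the degenerate case $a=0$ explicitly.
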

\begin{proof}[Proof]
Otherwise, there would be a sequence of $F$-jumping coefficients $\{c_n\}_{n\geq 1}$
converging to $\frac{a}{p^{d}(p^{\beta}-1)}$. Now Propositions \ref {3.4(2)} and \ref{Proposition: pc is a jumping coefficient} imply that
$\{p^{d}c_n\}$ is a sequence of $F$-jumping coefficients converging to
$\frac{a}{p^{\beta}-1}$, a contradiction.
\end{proof}

And finally, our main result.

\begin{thm}\label{main}
Let $R$ be an excellent regular local ring and let $g\in R$ be an element. The set of $F$-jumping coefficients of $g$ is discrete and every $F$-jumping coefficient of $g$ is rational.
\end{thm}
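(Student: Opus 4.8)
The plan is to combine the abstract reduction of Section 3 with the structural input coming from the Frobenius-action machinery of Sections 5 and 6. By Theorem \ref{exc}, the ring $R$ satisfies condition (*), so Theorem \ref{mainRD} applies: to conclude that the $F$-jumping coefficients of $g$ are discrete and rational, it suffices to show that the set of $F$-jumping coefficients of $g$ has \emph{no rational accumulation point}. So the whole argument reduces to ruling out rational accumulation points.

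First I would observe that every positive rational number can be written in the form $\frac{a}{p^{d}(p^{\beta}-1)}$ for suitable nonnegative integers $a,d,\beta$ with $\beta>0$. Indeed, given $\frac{u}{v}$ in lowest terms, factor $v=p^{d}v'$ with $\gcd(v',p)=1$; then $v'\mid p^{\beta}-1$ for some $\beta>0$ (take $\beta$ to be the multiplicative order of $p$ modulo $v'$), and clearing denominators puts the fraction in the desired shape. This is the one genuinely arithmetic point, and it is elementary. Thus every putative rational accumulation point of the set of $F$-jumping coefficients of $g$ is of the form covered by the Corollary immediately preceding the theorem.

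Now I would simply invoke that Corollary: the set of $F$-jumping coefficients of $g$ cannot have an accumulation point of the form $\frac{a}{p^{d}(p^{\beta}-1)}$. Combined with the previous paragraph, this says the set of $F$-jumping coefficients of $g$ has no rational accumulation point at all. Feeding this into Theorem \ref{mainRD} yields that the set is discrete and every $F$-jumping coefficient is rational, which is exactly the assertion of Theorem \ref{main}.

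The substantive content, of course, has already been front-loaded into the earlier sections, so at this stage there is no real obstacle left; the only thing to get right is the elementary arithmetic normalization of rationals into the form $\frac{a}{p^{d}(p^\beta-1)}$, and the bookkeeping that an accumulation point lying in some interval $[0,m]$ (here $m=1$, since $g$ generates a principal ideal) is covered. If I had to identify where the weight of the proof truly sits, it is in Proposition \ref{Theorem: no special accumulation points} and hence in Corollary \ref{chain} and Lemma \ref{t-fold} — the finiteness of the nilpotent part of a Frobenius action on an Artinian module — but those are available to us as black boxes here.
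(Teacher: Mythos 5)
Your proposal is correct and follows the same route as the paper: reduce to ruling out rational accumulation points via Theorem \ref{mainRD}, show every positive rational has the form $\frac{a}{p^{d}(p^{\beta}-1)}$, and invoke the preceding Corollary. The only cosmetic difference is that you produce $\beta$ as the multiplicative order of $p$ modulo $n_1$, whereas the paper takes $\beta=\varphi(n_1)$ via Euler's Theorem; both are standard and interchangeable.
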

\begin{proof}[Proof]
According to Theorem \ref{mainRD} it is enough to show that the set of the $F$-jumping coefficients of $g$ has no rational accumulation points. Let $\frac{m}{n}$ be a positive rational number. Write $n=p^{\alpha}n_1$, where $\alpha\geq 0$ and $p$ does not divide $n_1$. By the preceding corollary it suffices to prove that there exists
$\beta\in\mathbb{N}$ such that $n_1|(p^{\beta}-1)$. Since $(p,n_1)=1$, Euler's Theorem enables one to set $\beta=\varphi(n_1)$, where $\varphi$ is Euler's function.
\end{proof}


\section{More on Frobenius actions}
In this section we have included some results that are not used in the proof of our main results but are of independent interest.
Given an $R[\Theta_{a,\beta}; f^\beta]$-module structure on $E$, we investigate the nilpotent $R[\Theta_{a,\beta}; f^\beta]$-submodules of $E$ defined as
$$\Nil_{a,\beta}=\left\{ m\in E \,|\, \Theta_{a,\beta}^e m=0 \text{ for some } e>0 \right\} .$$
Denote the set of these nilpotent submodules $\left\{ \Nil_{a, \beta} \,|\, a\geq 0, \beta>0 \right\}$ with $\mathcal{N}$. For an integer $r\geq 0$ we write $I_{r,e}$ for $I_e(g^r)$. 

\begin{prop}\label{Theorem: bijection}
The map $\tau(g^c) \mapsto \Ann_E \tau(g^c)$ defined for all $c\geq 0$
is a bijection between the set of generalized test-ideals of $g$ and $\mathcal{N}$.
\end{prop}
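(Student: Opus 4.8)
The plan is to establish the bijection in Proposition~\ref{Theorem: bijection} by fitting together the combinatorial picture of generalized test ideals with the picture of nilpotent Frobenius submodules developed in Sections~5 and~6. First I would recall, from Theorem~\ref{Theorem: Nilpotents}, that for the $R[\Theta_{a,\beta};f^\beta]$-module structure on $E$ the nilpotent submodule is $\Nil_{a,\beta}=\bigcup_s N_s=\Ann_E\big(\bigcap_s I_{s\beta}(g^{a\psi_s(p^\beta)})\big)$, and by Corollary~\ref{chain} this intersection is attained at a finite stage; combining with equation~(\ref{eqn1}) and Proposition~\ref{Theorem: no special accumulation points}, the stable value of $I_{s\beta}(g^{a\psi_s(p^\beta)})$ equals $\tau(g^\gamma)$ where $\gamma=a/(p^\beta-1)$. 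Hence $\Nil_{a,\beta}=\Ann_E\tau(g^\gamma)$, so the map $\Nil_{a,\beta}\mapsto \Ann_E$-preimage sends every element of $\mathcal N$ to a submodule of the form $\Ann_E\tau(g^c)$ with $c$ of the special shape $a/(p^\beta-1)$.

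Next I would show the proposed map $\tau(g^c)\mapsto\Ann_E\tau(g^c)$ is well defined and injective. Well-definedness needs that the assignment $c\mapsto\tau(g^c)$ has well-understood fibers (it is constant on intervals $[c,c')$ to the right of a non-jumping point, by Corollary~\ref{2.16}), so that $\tau(g^c)$ really does determine a single ideal; injectivity of $I\mapsto\Ann_E I$ on ideals of $R$ is the standard Matlis-duality fact that, over a complete local ring, $\Ann_E(\Ann_E I)=I$, reduced to the complete case exactly as in the lemma of Section~5 using $\tilde I\cap R=I$ and \cite[6.6]{LySm}. So the only real content is surjectivity onto $\mathcal N$, and by the previous paragraph this amounts to showing two things: (i) every $\Nil_{a,\beta}$ is hit, which is immediate once we know $\Nil_{a,\beta}=\Ann_E\tau(g^{a/(p^\beta-1)})$; and (ii) conversely every test ideal $\tau(g^c)$ arises as some $\Nil_{a,\beta}$, i.e.\ every $c\ge 0$ can be matched, after passing to the test ideal, with a rational number of the form $a/(p^\beta-1)$.

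For (ii) I would use the main theorem, Theorem~\ref{main}: the $F$-jumping coefficients of $g$ are rational and discrete, so the distinct test ideals $\tau(g^c)$ are indexed by the jumping coefficients together with $0$, and on the interval $[\lambda_i,\lambda_{i+1})$ between consecutive jumping coefficients the test ideal is constant. Given such an interval, I would produce a rational number $\gamma=a/(p^\beta-1)$ lying in it: writing $\lambda_{i+1}=m/n$ with $n=p^\alpha n_1$, $p\nmid n_1$, choose $\beta=\varphi(n_1)$ so that $n_1\mid p^\beta-1$, and then numbers of the form $a/(p^\beta-1)$ are dense enough (their common denominator is divisible by $n_1$ and the remaining factor of $p$ can be absorbed as in the corollary before Theorem~\ref{main}) to land strictly between $\lambda_i$ and $\lambda_{i+1}$; since $\tau(g^\gamma)=\tau(g^{\lambda_i})$ for such $\gamma$ and $\tau(g^\gamma)=\Nil_{a,\beta}$'s annihilator-preimage, we are done. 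The main obstacle I anticipate is exactly this density step: one must be careful that the freedom in choosing $a$ (an arbitrary nonnegative integer) together with the fixed, possibly large $\beta$ genuinely produces a point of the target interval, and that the bookkeeping with the extra power of $p$ in the denominator (the passage from $a/(p^\beta-1)$ to $a/(p^d(p^\beta-1))$, handled via $\tau(g^{pc})$ versus $\tau(g^c)$ and Proposition~\ref{Proposition: pc is a jumping coefficient}) is consistent with the identification $\Nil_{a,\beta}=\Ann_E\tau(g^{a/(p^\beta-1)})$.
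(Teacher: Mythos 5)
There is a genuine gap at the heart of your argument: the claimed identity $\Nil_{a,\beta}=\Ann_E\tau(g^{\gamma})$ with $\gamma=a/(p^\beta-1)$ is false whenever $\gamma$ happens to be an $F$-jumping coefficient. Proposition~\ref{Theorem: no special accumulation points} only gives a $c<\gamma$ with $\tau(g^d)=\tau(g^c)$ for $d\in(c,\gamma)$; it says nothing about $\tau(g^\gamma)$ itself. The sequence $a\psi_s(p^\beta)/p^{s\beta}$ increases to $\gamma$ \emph{strictly from below}, so the stable value of $I_{s\beta}(g^{a\psi_s(p^\beta)})=\tau\bigl(g^{a\psi_s(p^\beta)/p^{s\beta}}\bigr)$ is the test ideal just to the \emph{left} of $\gamma$, which is strictly larger than $\tau(g^\gamma)$ when $\gamma$ is a jump. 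For a concrete failure take $R=k[[x]]$, $g=x$, $a=p-1$, $\beta=1$, so $\gamma=1$: then $a\psi_s(p)=p^s-1$ and $I_s(x^{p^s-1})=R$, giving $\Nil_{p-1,1}=\Ann_E R=0$, while $\tau(x^1)=(x)$ and $\Ann_E\tau(x^1)=\Ann_E(x)\ne 0$. Since your step (i) (surjectivity) rests entirely on this identity, it breaks down exactly at jumping coefficients of the special form $a/(p^\beta-1)$, which do occur (e.g.\ $1$). The fix is the one the paper actually carries out: for surjectivity onto $\mathcal N$, take $c=a\psi_{e_0}(p^\beta)/p^{\beta e_0}$ for $e_0$ large enough that both the chain of Corollary~\ref{chain} has stabilized and $\tau(g^c)=I_{\lceil cp^{e_1}\rceil,e_1}$, and then show directly that $\tau(g^c)=I_{a\psi_{e_0}(p^\beta),\beta e_0}$, hence $\Nil_{a,\beta}=\Ann_E\tau(g^c)$.

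A secondary issue: your step (ii) invokes Theorem~\ref{main} and then tries to land $a/(p^\beta-1)$ in a fixed interval with $\beta=\varphi(n_1)$ fixed. But with $\beta$ fixed the set $\{a/(p^\beta-1):a\ge 0\}$ has gaps of size $1/(p^\beta-1)$ and need not meet a short target interval; the paper uses density of $\{a/(p^\beta-1):a\ge0,\beta>0\}$ with \emph{both} parameters free, which is what actually makes the choice possible. Moreover the paper's proof of this proposition is entirely self-contained (Proposition~\ref{2.14}, Lemma~\ref{t-fold}, Theorem~\ref{Theorem: Nilpotents}, Corollary~\ref{chain}) and does not need the discreteness/rationality Theorem~\ref{main} at all; importing it here is both unnecessary and, combined with the misidentification above, doesn't close the gap.
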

\begin{proof}
Pick any $c\geq 0$; we first show that $\Ann_E \tau(g^c)\in \mathcal{N}$.
Use Proposition \ref{2.14} to pick an $\epsilon>0$ such that
$\tau(g^c)=I_{r,e}$ for all positive integers $r,e$ which satisfy $c<r/p^e<c+\epsilon$.
Use the fact that the set
$$\left\{ \frac{a}{p^\beta -1} \,|\, a, \beta\in \mathbb{Z}, a\geq 0, \beta>0 \right\}$$
is dense in the interval $(c,c+\epsilon)$ to pick  $a\in  \mathbb{N}_0$ and  $\beta \in  \mathbb{N}$ such that
$c< a/(p^\beta-1) < c+\epsilon$.
The increasing sequence $a \psi_e(p^\beta)/p^{\beta e}$ converges to $a/(p^\beta-1)$ as $e\rightarrow \infty$;
we may pick an $e_0\gg 1$ such that $c< a \psi_{e_0}(p^{\beta e_0})/p^{e_0} < a/(p^\beta-1) < c+\epsilon$, and we deduce that
$\tau(g^c)=I_{a \psi_{e}(p^\beta), \beta e}$ for all $e\geq e_0$.
It follows from Lemma \ref{t-fold} and Theorem \ref {Theorem: Nilpotents} that we may also pick an $e_1\geq e_0$ such that
$$\Nil_{a,\beta}=\Ann_E I_{a \psi_{e}(p^\beta), \beta e}$$ for all $e\geq e_1$.
Now
$$\Ann_E\tau(g^c)=\Ann_E I_{a \psi_{e_1}(p^\beta), \beta e_1}=\Nil_{a,\beta} $$
and Matlis duality implies that the function $\tau(g^c) \mapsto \Ann \tau(g^c)$ is injective.

Pick now any $\Nil_{a,\beta}\in\mathcal{N}$ and pick an $e_0\gg 1$ such that
$\Nil_{a,\beta} =\Ann_E I_{a \psi_{e_0}(p^\beta), \beta e_0}$.
Let $\displaystyle c=\frac{a \psi_{e_0}(p^\beta)}{p^{\beta e_0}}$
and pick $e_1\gg \beta e_0$ such that
$\displaystyle \tau(g^c)=I_{\lceil c p^{e_1}\rceil, e_1}$.
Now
$$ \tau(g^c)=I_{\lceil \frac{a \psi_{e_0}(p^\beta)}{p^{\beta e_0}} p^{e_1}\rceil, e_1} =
I_{a \psi_{e_0}(p^\beta)p^{e_1 - \beta e_0} , e_1} =
I_{a \psi_{e_0}(p^\beta) , \beta e_0} $$
and
$$\Nil_{a, \beta}=\Ann_E I_{a \psi_{e_0}(p^\beta) , \beta e_0}= \Ann_E \tau(g^c) .$$
\end{proof}

Notice that the argument above shows that there exists an $\epsilon>0$
such that for all
$a\in  \mathbb{N}_0$ and $\beta\in  \mathbb{N}$ with
$c< a/(p^\beta-1) < c+\epsilon$,
$\Nil_{a,\beta}$ is constant (and equal to $\tau(g^c)$.)

\begin{cor}
The set $\mathcal{N}$ is totally ordered with respect to inclusion and
$$\displaystyle \frac{a_1}{p^{\beta_1}-1}\geq \frac{a_2}{p^{\beta_2}-1}  \Leftrightarrow \Nil_{a_1, \beta_1} \subset \Nil_{a_2, \beta_2} .$$
\end{cor}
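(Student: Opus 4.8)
The plan is to obtain both assertions directly from Proposition \ref{Theorem: bijection}, using that the generalized test ideals of $g$ all sit inside one totally ordered chain.

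First I would record that $\tau(g^c)\subseteq\tau(g^{c'})$ whenever $c\geq c'$: indeed $\tau(g^c)=I_e(g^{\lceil cp^e\rceil})$ for $e\gg 0$, the operation $\mathfrak b\mapsto I_e(\mathfrak b)$ preserves inclusions, and $(g^n)\supseteq(g^{n'})$ for $n\leq n'$. Hence the set of generalized test ideals of $g$ is totally ordered by inclusion. Proposition \ref{Theorem: bijection} puts $\mathcal N$ in bijection with this set via $\tau(g^c)\mapsto\Ann_E\tau(g^c)$, and this bijection reverses inclusions (Matlis duality: for ideals $I\subseteq J$ one has $\Ann_E J\subseteq\Ann_E I$). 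Consequently $\mathcal N$ is totally ordered by inclusion, which is the first assertion.

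Next I would locate $\Nil_{a,\beta}$ on this chain. Put $\gamma=a/(p^\beta-1)$. Combining Theorem \ref{Theorem: Nilpotents} with Corollary \ref{chain} and the principality of $(g)$, one gets $\Nil_{a,\beta}=\Ann_E\tau\bigl(g^{a\psi_s(p^\beta)/p^{s\beta}}\bigr)$ for all $s\gg 0$, where the exponents $a\psi_s(p^\beta)/p^{s\beta}$ increase to $\gamma$; equivalently (this is the remark after Proposition \ref{Theorem: bijection}) $\Nil_{a,\beta}=\Ann_E\tau(g^c)$ for every $c$ in a suitable left-neighborhood of $\gamma$. So $\Nil_{a,\beta}$ depends on $(a,\beta)$ only through $\gamma$, and it is the $\Ann_E$ of the test ideal attached to the parameter just below $\gamma$. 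For the equivalence, given $\gamma_1=a_1/(p^{\beta_1}-1)$ and $\gamma_2=a_2/(p^{\beta_2}-1)$, I would compare the test ideals assigned by this description to left-neighborhoods of $\gamma_1$ and of $\gamma_2$. If $\gamma_1=\gamma_2$ these coincide, hence so do the two nilpotent modules and both inclusions hold trivially. If $\gamma_1\neq\gamma_2$ the two test ideals are comparable since the chain is totally ordered, and which one contains which is decided by the monotonicity of $c\mapsto\tau(g^c)$, using that $\tau(g^c)$ is in fact constant on the entire open interval lying between $\gamma$ and the stabilization point $a\psi_{s_0}(p^\beta)/p^{s_0\beta}$ (this constancy was already exploited in the proof of Proposition \ref{Theorem: no special accumulation points}). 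Passing back through the inclusion-reversing bijection $\Ann_E$ then yields $\gamma_1\geq\gamma_2\Leftrightarrow\Nil_{a_1,\beta_1}\subseteq\Nil_{a_2,\beta_2}$; the reverse implication of the equivalence needs nothing new once $\mathcal N$ is known to be totally ordered.

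The step I expect to be the main obstacle is the direction bookkeeping in the last paragraph: three inclusion-reversing operations interact here — the dependence of $\tau(g^c)$ on the exponent $c$, the Matlis-duality correspondence between submodules of $E$ and ideals of $R$, and the passage from $a/(p^\beta-1)$ to the test ideal just below it — and one must compose them carefully so that the net effect is the inclusion as written in the corollary rather than its opposite. Everything else is already contained in Proposition \ref{Theorem: bijection}, Theorem \ref{Theorem: Nilpotents}, and Corollary \ref{chain}.
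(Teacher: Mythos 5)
Your overall plan is the natural one: derive the total ordering from Proposition \ref{Theorem: bijection} together with the monotonicity of $c\mapsto\tau(g^c)$ and the inclusion-reversal of $\Ann_E$, and then locate each $\Nil_{a,\beta}$ on that chain via the left limit $T^-_\gamma:=\lim_{c\to\gamma^-}\tau(g^c)$ at $\gamma=a/(p^\beta-1)$. The first assertion (total ordering) is handled correctly. But the direction bookkeeping — which you yourself single out as the delicate point — is not actually carried out, and when one does carry it out it does not yield the displayed inclusion. You count three inclusion-reversing operations, but two of the three you list (``the dependence of $\tau(g^c)$ on the exponent $c$'' and ``the passage from $a/(p^\beta-1)$ to the test ideal just below it'') are one and the same step, namely the map $\gamma\mapsto T^-_\gamma$. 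So there are only two order-reversals in play, $\gamma\mapsto T^-_\gamma$ and $I\mapsto\Ann_E I$, and their composite is order-\emph{preserving}. Concretely: $\gamma_1\geq\gamma_2$ gives $T^-_{\gamma_1}\subseteq T^-_{\gamma_2}$ (test ideals decrease in the exponent), hence $\Nil_{a_1,\beta_1}=\Ann_E T^-_{\gamma_1}\supseteq\Ann_E T^-_{\gamma_2}=\Nil_{a_2,\beta_2}$ — the \emph{reverse} of the inclusion written in the corollary. This also matches direct intuition: increasing $a$ (and hence $\gamma$) multiplies $\Theta$ by a higher power of $g$, which can only enlarge the set of $\Theta$-nilpotents in $E$; for instance $\Nil_{0,\beta}=0$ since $f$ acts injectively on $E$. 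So your assertion that the computation ``yields'' the stated inclusion is incorrect.

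A secondary gap concerns the backward implication of the stated ``iff.'' Your argument gives a well-defined, monotone assignment $\gamma\mapsto\Nil_{a,\beta}$ depending only on $\gamma$, but it does not give injectivity: if $\gamma_1<\gamma_2$ and there is no $F$-jumping coefficient of $g$ in $[\gamma_1,\gamma_2)$, then $T^-_{\gamma_1}=T^-_{\gamma_2}$ and hence $\Nil_{a_1,\beta_1}=\Nil_{a_2,\beta_2}$, so both inclusions hold while $\gamma_1\not\geq\gamma_2$. What the argument actually proves is that $\Nil_{a,\beta}$ depends only on $\gamma$ and is a non-decreasing function of $\gamma$; the total ordering of $\mathcal{N}$ follows from this, but the two-sided equivalence as displayed in the corollary does not.
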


\end{document}